\newtheorem{theorem}{Theorem}
\newtheorem{lemma}[theorem]{Lemma}
\newtheorem{corollary}[theorem]{Corollary}
\def\ci{\!\perp\!}
\def\nci{\!\not\perp\!}
\def\ra{\rightarrow}
\def\la{\leftarrow}
\def\aa{\leftrightarrow}
\def\ao{\leftarrow\!\!\!\!\!\multimap}
\def\oa{\mathrel{\reflectbox{\ensuremath{\ao}}}}
\def\oo{\mathrel{\reflectbox{\ensuremath{\multimap}}}\!\!\!\!\!\multimap}
\newcommand{\comments}[1]{}
\tikzset{tt/.style={decoration={
  markings,
  mark=at position .485 with {\arrow{>}},
  mark=at position .515 with {\arrow{<}}},postaction={decorate}}}
\begin{document}

\title[]{Towards Conditional Path Analysis$^*$}\thanks{$^*$This work extends our results in \citet{Penna2020} from singly-connected path diagrams to a superclass thereof.}

\author[]{Jose M. Pe\~{n}a\\
IDA, Link\"oping University, Sweden\\
jose.m.pena@liu.se}


\begin{abstract}
We extend path analysis by giving sufficient conditions for computing the partial covariance of two random variables from their covariance. This is specifically done by correcting the covariance with the product of some partial variance ratios. As a result, the partial covariance retains the covariance's salient feature of factorizing over the edges in the paths between the two variables of interest.
\end{abstract}

\maketitle

\section{Introduction}

To ease interpretation, linear structural equation models are typically represented as path diagrams: Nodes represent random variables, directed edges represent direct causal relationships, and bidirected edges represent confounding, i.e. correlation between error terms. Moreover, each directed edge is annotated with the corresponding coefficient in the linear structural equation model, a.k.a. path coefficient. Likewise, each bidirected edge is annotated with the corresponding error covariance. A path diagram also brings in computational benefits. For instance, it is known that the covariance $\sigma_{XY}$ of two standardized random variables $X$ and $Y$ can be determined from the path diagram. Specifically, $\sigma_{XY}$ can be expressed as the sum for every $\emptyset$-open path from $X$ to $Y$ of the product of path coefficients and error covariances for the edges in the path \citep{Wright1921,Pearl2009}. For non-standardized variables, one has to multiply the product associated to each path with the variance of the root variable in the path, i.e. the variable with no incoming edges. A path can have no root variables ($X \aa Z \ra \cdots \ra Y$ or $X \la \cdots \la Z \aa W \ra \cdots \ra Y$) or one root variable ($X \ra \cdots \ra Y$ or $X \la \cdots \la Z \ra \cdots \ra Y$).

In this work, we develop a similar factorization for the partial covariance $\sigma_{XY \cdot Z}$ for certain path diagrams. More specifically, we give sufficient conditions for computing $\sigma_{XY \cdot Z}$ by correcting $\sigma_{XY}$ with the product of some partial variance ratios. This implies that $\sigma_{XY \cdot Z}$ factorizes over the edges of the paths from $X$ to $Y$, much in the same way as $\sigma_{XY}$ does. Moreover, we use the factorization to show that Simpson's paradox cannot occur in certain path diagrams.

We conclude this introduction by recalling some related works. \citet{ChaudhuriandRichardson2003} and \citet{Chaudhuri2005} identify sufficient conditional independencies for ordering squared partial correlation coefficients in singly-connected path diagrams. \citet{Chaudhuri2014} extends these results to general Gaussian random vectors. \citet{ChaudhuriandTan2010} report similar general results for absolute values of partial regression coefficients. Finally, \citet{Ong2014} proves similar results for (signed) partial covariances, correlation coefficients and regression coefficients for singly-connected path diagrams and general Gaussian random vectors. It should be noted that these works consider paths with colliders and we do not. However, none of these works develops a factorization of the measure of association under study, as we do for the partial covariance. Moreover, the path diagrams that we consider in this work are a superclass of singly-connected diagrams. Therefore, our results about the impossibility of Simpson's paradox subsume the results by \citet{Pearl2014}, who identified three singly-connected path diagrams that do not lead to the paradox.

The rest of this work is structured as follows. Section \ref{sec:condpath} presents our factorization of partial covariances, and the conditions under which it is valid. The section also features some illustrative examples. Section \ref{sec:Simpson} shows that Simpson's paradox cannot occur under the conditions in the previous section. Section \ref{sec:discussion} closes with some discussion.

\section{Conditional Path Analysis}\label{sec:condpath}

We start by recalling the separation criterion for path diagrams. Given a path $\pi_{X:Y}$ from a node $X$ to a node $Y$ in a path diagram, a node $C$ is a collider in $\pi_{X:Y}$ if $A \oa C \ao B$ is a subpath of $\pi_{X:Y}$, where $\oa$ means $\ra$ or $\aa$. Given a set of nodes $Z$, $\pi_{X:Y}$ is said to be $Z$-open if
\begin{itemize}
\item every collider in $\pi_{X:Y}$ is in $Z$ or has some descendant in $Z$, and
\item every non-collider in $\pi_{X:Y}$ is outside $Z$.
\end{itemize}
Whereas all the nodes in a path must be different, the nodes in a route do not need to be so. Given a route $\rho_{X:Y}$ from a node $X$ to a node $Y$ in a path diagram, a node $C$ is a collider in $\rho_{X:Y}$ if $A \oa C \ao B$ is a subroute of $\rho_{X:Y}$. Note that $A$ and $B$ may be the same node. Given a set of nodes $Z$, $\rho_{X:Y}$ is said to be $Z$-open if
\begin{itemize}
\item every collider in $\rho_{X:Y}$ is in $Z$, and
\item ever non-collider in $\rho_{X:Y}$ is outside $Z$.
\end{itemize}
Note that there is a $Z$-open route from $X$ to $Y$ if and only if there is a $Z$-open path from $X$ to $Y$ (see Lemma \ref{lem:pathroute} in the appendix). We use $X \nci Y | Z$ and $X \ci Y | Z$ to denote, respectively, the existence or not of such a path. We say that $X$ and $Y$ are $Z$-connected if $X \nci Y | Z$.

If $X \ci Y | Z$, then one can readily conclude that $\sigma_{XY \cdot Z}=0$. If on the other hand $X \nci Y | Z$, then one may think that $\sigma_{XY \cdot Z}$ can be obtained by first applying path analysis to obtain an expression for $\sigma_{XY}$ and, then, modifying this expression by replacing (co)variances with partial (co)variances given $Z$. However, this is incorrect as the following example shows. Consider the path diagram to the left in Figure \ref{fig:counterexample}, which corresponds to the following linear structural equation model:
\begin{align*}
X &= \epsilon_X\\
Y &= \alpha X + \epsilon_Y\\
Z &= \delta Y + \epsilon_Z.
\end{align*}
Consider representing the error terms explicitly in the diagram, which results in the path diagram to the right in Figure \ref{fig:counterexample}. Then,
\[
\sigma_{XY} = cov(X, \alpha X + \epsilon_Y) = \alpha \sigma^2_X + cov(X,\epsilon_Y) = \alpha \sigma^2_X
\]
where the last equality follows from the fact that $cov(X,\epsilon_Y)=0$ since $X \ci \epsilon_Y | \emptyset$. However, 
\[
\sigma_{XY \cdot Z} = cov(X, \alpha X + \epsilon_Y | Z) = \alpha \sigma^2_{X \cdot Z} + cov(X,\epsilon_Y | Z) \neq \alpha \sigma^2_{X \cdot Z}
\]
where the last inequality follows from the fact that $cov(X,\epsilon_Y | Z) \neq 0$ in general, since $X \ci \epsilon_Y | Z$ does not hold. The theorems below show that we can obtain $\sigma_{XY \cdot Z}$ by correcting $\sigma_{XY}$ with a partial variance ratio as follows:
\[
\sigma_{XY \cdot Z} = \sigma_{XY} \frac{\sigma^2_{Y \cdot Z}}{\sigma^2_Y}.
\]

\begin{figure}
\begin{tabular}{c|c}
\begin{tabular}{c}
\begin{tikzpicture}[inner sep=1mm]
\node at (0,0) (X) {$X$};
\node at (1.5,-1.2) (Z) {$Z$};
\node at (1.5,0) (Y) {$Y$};
\path[->] (X) edge node[above] {$\alpha$} (Y);
\path[->] (Y) edge node[right] {$\delta$} (Z);
\end{tikzpicture}
\end{tabular}
&
\begin{tabular}{c}
\begin{tikzpicture}[inner sep=1mm]
\node at (0,0) (X) {$X$};
\node at (0,1.2) (eX) {$\epsilon_X$};
\node at (1.5,-1.2) (Z) {$Z$};
\node at (3,-1.2) (eZ) {$\epsilon_Z$};
\node at (1.5,0) (Y) {$Y$};
\node at (3,0) (eY) {$\epsilon_Y$};
\path[->] (X) edge node[above] {$\alpha$} (Y);
\path[->] (Y) edge node[right] {$\delta$} (Z);
\path[->] (eX) edge node[right] {$1$} (X);
\path[->] (eY) edge node[above] {$1$} (Y);
\path[->] (eZ) edge node[above] {$1$} (Z);
\end{tikzpicture}
\end{tabular}
\end{tabular}\caption{Path diagram illustrating that computing $\sigma_{XY \cdot Z}$ is non-trivial.}\label{fig:counterexample}
\end{figure}
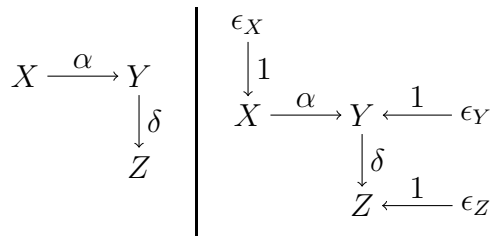

\subsection{Main Contribution}\label{sec:main}

The two theorems in this section are the main contribution of this work. Before presenting them, we need to introduce some notation. The parents of a node $X$ are $Pa(X) = \{Y | Y \ra X \}$. The children of $X$ are $Ch(X) = \{Y | X \ra Y \}$. The spouses of $X$ are $Sp(X) = \{Y | X \aa Y \}$. Moreover, we define the operation of conditioning a path diagram on a node $A$ as replacing every edge $A \ra B$ with an edge $A_B \ra B$, where $A_B$ is a new node. Note that $A$ is not removed. In terms of the associated system of linear equations, this implies (i) adding a new equation $A_B = \epsilon_{A_B}$ where $\epsilon_{A_B}$ is normally distributed with arbitrary mean and variance, and (ii) replacing every equation $B = \alpha^T (A, Pa(B) \setminus A) + \epsilon_B$ with an equation $B = \alpha^T (A_B, Pa(B) \setminus A) + \epsilon_B$. Note that, after conditioning, we have that $Ch(A)=\emptyset$ whereas $Pa(A_B) \cup Sp(A_B)=\emptyset$ and $Ch(A_B)=B$. See Figure \ref{fig:illustration} for an illustration. Let $V$ denote all the nodes in the path diagram at hand, and consider the distribution $p(V \setminus A, A=a)$ defined by the system of equations before conditioning on $A$. This is the unnormalized conditional distribution of $V \setminus A$ given $A$. Let $A'$ denote the new nodes created by conditioning on $A$, and consider the distribution $p(V \setminus A, A=a, A'=a)$ defined by the system of equations after conditioning on $A$. This is the unnormalized conditional distribution of $V \setminus A$ given $A \cup A'$. Note that both unnormalized conditional distributions coincide for all $a$, i.e. $p(V \setminus A = x, A=a) = p(V \setminus A = x, A=a, A'=a)$ for all $x$ and $a$. Thus, their normalized versions coincide as well. So, computing partial covariances in either of them gives the same result, since partial covariances coincide with conditional covariances for Gaussian random vectors. In other words, the partial covariance $\sigma_{XY \cdot A}$ in the original path diagram is equal to $\sigma_{XY \cdot A A'}$ in the conditional diagram. Finally, we define conditioning on a set of nodes $S$ as conditioning on each node in $S$. By the previous reasoning, the partial covariance $\sigma_{XY \cdot S}$ in the original path diagram is equal to $\sigma_{XY \cdot S S'}$ in the conditional diagram, where $S'$ denotes the new nodes created by conditioning on $S$. The following theorems show how to compute the latter. See the appendix for the proofs. We illustrate the theorems through some examples in the next section.

\begin{figure}
\begin{tabular}{c|c}
\begin{tabular}{c}
\begin{tikzpicture}[inner sep=1mm]
\node at (0,0) (W) {$A$};
\node at (-1,1) (X1) {};
\node at (1,1) (X2) {};
\node at (-1.3,0) (X3) {};
\node at (1.3,0) (X4) {};
\node at (-1,-1) (X5) {$B$};
\node at (1,-1) (X6) {$C$};
\path[->] (X1) edge (W);
\path[->] (X2) edge (W);
\path[<->] (X3) edge (W);
\path[<->] (X4) edge (W);
\path[<-] (X5) edge (W);
\path[<-] (X6) edge (W);
\end{tikzpicture}
\end{tabular}
&
\begin{tabular}{c}
\begin{tikzpicture}[inner sep=1mm]
\node at (0,0) (W) {$A$};
\node at (-0.5,-1) (W1) {$A_B$};
\node at (0.5,-1) (W2) {$A_C$};
\node at (-1,1) (X1) {};
\node at (1,1) (X2) {};
\node at (-1.3,0) (X3) {};
\node at (1.3,0) (X4) {};
\node at (-1.5,-2) (X5) {$B$};
\node at (1.5,-2) (X6) {$C$};
\path[->] (X1) edge (W);
\path[->] (X2) edge (W);
\path[<->] (X3) edge (W);
\path[<->] (X4) edge (W);
\path[<-] (X5) edge (W1);
\path[<-] (X6) edge (W2);
\end{tikzpicture}
\end{tabular}
\end{tabular}\caption{Conditioning the path diagram to the left on the node $A$ results in the diagram to the right.}\label{fig:illustration}
\end{figure}
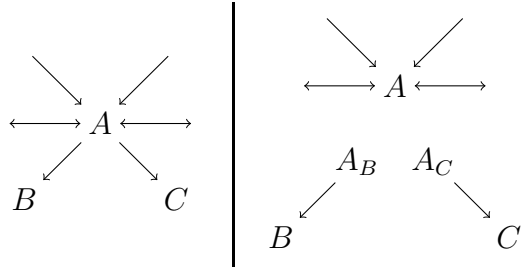

\begin{theorem}\label{the:condpathroot}
Consider a path diagram conditioned on a set of nodes $S$. Let $Z = S \cup S'$. Let $\Pi_{X:Y}$ denote all the $Z$-open paths from $X$ to $Y$. Suppose that no path in $\Pi_{X:Y}$ has colliders. Suppose that all the paths in $\Pi_{X:Y}$ have a subpath $X_m \la \cdots \la X_2 \la X_1 \ra X_{m+1} \ra \cdots \ra X_{m+n}$ or $X_1 = X \ra X_{2} \ra \cdots \ra X_{m+n}$. Suppose that there is no $Z$-open route $X_i \ra A \oo \cdots \oo B \oa X_i$ with $i>1$. Moreover, let $Z_i^i=Z_i \cup Z^i$ and $Z_{1:a}^{1:b} = Z_1 \cup \cdots \cup Z_a \cup Z^1 \cup \cdots \cup Z^b$ where
\begin{itemize}
\item $Z^i = \{W_1, W_2, \ldots\}$ is a subset of $Z \setminus Z^{1:i-1}_{1:i-1}$ such that each $W_j$ is $(Z^{1:i-1}_{1:i-1} \cup W_{1:j-1})$-connected to $X_i$ through $Pa(X_i) \cup Sp(X_i)$ by a path that does not contain any node that is in some path in $\Pi_{X:Y}$ except $X_i$, and
\item $Z_i = \{W_1, W_2, \ldots\}$ is a subset of $Z \setminus Z^{1:i}_{1:i-1}$ such that each $W_j$ is $(Z^{1:i}_{1:i-1} \cup W_{1:j-1})$-connected to $X_i$ through $Ch(X_i)$ by a path that does not contain any node that is in some path in $\Pi_{X:Y}$ except $X_i$.
\end{itemize}
Finally, let $Z \setminus Z^{1:m+n}_{1:m+n} = \{W_1, W_2, \ldots\}$ and suppose that $X \ci W_j | Z^{1:m+n}_{1:m+n} \cup W_{1:j-1}$ or $Y \ci W_j | Z^{1:m+n}_{1:m+n} \cup W_{1:j-1}$. Then,
\[
\sigma_{X Y \cdot Z} = \sigma_{X Y} \frac{\sigma^2_{X_1 \cdot Z_1^1}}{\sigma^2_{X_1}} \prod_{i=2}^{m+n} \frac{\sigma^2_{X_i \cdot Z_{1:i}^{1:i}}}{\sigma^2_{X_i \cdot Z_{1:i-1}^{1:i}}}.
\]
\end{theorem}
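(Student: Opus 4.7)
My plan is to establish the formula in three stages: first trim the conditioning set using the final independence hypothesis, then apply ordinary path analysis to what remains, and finally track how the extra conditioning rescales the variance contribution at each $X_i$ along the common subpath of $\Pi_{X:Y}$.

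For stage one, I would strip $Z$ down to $Z^{1:m+n}_{1:m+n}$. The hypothesis that $X \ci W_j | Z^{1:m+n}_{1:m+n} \cup W_{1:j-1}$ or $Y \ci W_j | Z^{1:m+n}_{1:m+n} \cup W_{1:j-1}$, combined with the standard Gaussian identity $\sigma_{XY \cdot TW} = \sigma_{XY \cdot T}$ whenever $X \ci W | T$ or $Y \ci W | T$, lets me peel the $W_j$ off one at a time, so $\sigma_{XY \cdot Z} = \sigma_{XY \cdot Z^{1:m+n}_{1:m+n}}$. For stage two, I would apply ordinary path analysis to the conditioned diagram: because every path in $\Pi_{X:Y}$ is collider-free and has the prescribed shape with a single root $X_1$, each path contributes the product of its edge labels multiplied by $\sigma^2_{X_1}$. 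The prohibition of a $Z$-open route $X_i \ra A \oo \cdots \oo B \oa X_i$ for $i>1$ prevents conditioning from creating additional roots or new $Z$-open paths, so the path-analytic expression for $\sigma_{XY}$ survives intact with only the root variance $\sigma^2_{X_1}$ to be replaced by an appropriate partial variance.

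The heart of the argument is stage three, where I would account for the variance correction at every $X_i$. By construction $Z^i$ reaches $X_i$ only through $Pa(X_i) \cup Sp(X_i)$ and $Z_i$ only through $Ch(X_i)$, both along paths that avoid every other node of $\Pi_{X:Y}$. Consequently, adjoining $Z^i \cup Z_i$ to the conditioning set alters the path-analytic sum solely through the variance at $X_i$, while leaving every other factor of every path-product untouched; iterating over $i = 1, \ldots, m+n$ and inserting $Z^i$ before $Z_i$ at each stage then telescopes the residual variance into the claimed product, with the ratio $\sigma^2_{X_i \cdot Z^{1:i}_{1:i}} / \sigma^2_{X_i \cdot Z^{1:i}_{1:i-1}}$ recording precisely the additional variance reduction produced by $Z_i$ after $Z^i$ has already been conditioned on. The principal obstacle I anticipate lies here: justifying rigorously that the conditioning operations commute with the path-analytic decomposition in exactly this order, so that adjoining $Z^i \cup Z_i$ modifies the partial covariance by nothing more than the stated variance ratio at $X_i$. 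This will require repeated appeals to the non-interference condition (that the connecting paths avoid $\Pi_{X:Y} \setminus \{X_i\}$) and uses that conditioning on children of $X_i$ generally perturbs covariances involving its parents and spouses, so $Z^i$ must indeed be processed before $Z_i$ for the factorization to separate cleanly.
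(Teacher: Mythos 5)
There is a genuine gap, and it sits exactly where you flag it: your ``stage three'' asserts, rather than proves, that adjoining $Z^i \cup Z_i$ changes the partial covariance by nothing but the variance ratio at $X_i$. That claim is the entire content of the theorem, and the paper's proof spends all of its effort on it. What is missing from your outline is (i) the algebraic engine: exact identities showing how $\sigma_{XY\cdot TW}$ relates to $\sigma_{XY\cdot T}$ when certain conditional independencies hold --- namely, if $X \ci W \mid T\cup R$, $Y \ci W \mid T\cup R$ and $X \ci Y \mid T\cup R$ then $\sigma_{XY\cdot TW}=\sigma_{XY\cdot T}\,\sigma^2_{R\cdot TW}/\sigma^2_{R\cdot T}$; if $Y \ci W\mid T\cup X$ then the same with $R=X$; and if $X\ci W\mid T$ or $Y\ci W\mid T$ then $\sigma_{XY\cdot TW}=\sigma_{XY\cdot T}$ (these are Lemmas \ref{lem:aux1}--\ref{lem:aux3} in the paper); and (ii) the graphical verification that, when the nodes of $Z^i$ and $Z_i$ are added one at a time in the prescribed order, the required independencies actually hold. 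That verification is a delicate route-based case analysis (Lemmas \ref{lem:root1}--\ref{lem:nonroot4}) which uses precisely the hypotheses you treat loosely: the fact that after conditioning every $W\in Z$ has either $Pa(W)\cup Sp(W)=\emptyset$ or $Ch(W)=\emptyset$; the no-$Z$-open-route condition $X_i \ra A \oo \cdots \oo B \oa X_i$, whose actual role is to guarantee that a node of $Z^i$ is not also $Z$-connected to $X_i$ through $Ch(X_i)$ (and symmetrically for $Z_i$), not merely to ``prevent new roots''; the requirement that the connecting paths avoid $\Pi_{X:Y}\setminus\{X_i\}$; and the fact that $\Pi_{X:Y}$ remains the set of all open paths at every intermediate conditioning stage, which itself needs the observation that a node of $Z$ can never close an open path because the conditioning operation leaves it with no configuration $\la W \ra$ or $\oa W \ra$. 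None of these steps is routine, and without them the telescoping you describe is a restatement of the conclusion.

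A secondary concern is your stage two. The paper never invokes Wright's edge-product decomposition in the proof; the factorization over edges is a corollary of the final formula, not an ingredient. Leaning on ``the path-analytic expression survives intact with only the root variance replaced'' is exactly the kind of reasoning the paper's own counterexample (Figure \ref{fig:counterexample}) warns against: substituting partial (co)variances into the path-analysis expression is invalid in general, and the correct statement only emerges from the sequential conditional-independence argument above. Your stage one (peeling off $Z\setminus Z^{1:m+n}_{1:m+n}$ using the final independence hypothesis) is sound and coincides with the paper's last step, and your ordering intuition ($Z^i$ before $Z_i$, with $Z^i$ producing no correction for $i>1$ and $Z_i$ producing the ratio) matches the paper's structure; but as it stands the proposal is a plan for a proof, not a proof.
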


\begin{theorem}\label{the:condpathnonroot}
Consider the same assumptions as in Theorem \ref{the:condpathroot} with the only exception that all the paths in $\Pi_{X:Y}$ have now a subpath $X_m \la \cdots \la X_2 \la X_1 \aa X_{m+1} \ra \cdots \ra X_{m+n}$ or $X_1 \aa X_{2} \ra \cdots \ra X_{m+n}$ or $\oa X_1 \ra \cdots \ra X_{m+n}$.\footnote{In the third subpath type, the predecessor of $X_1$ does not have to be the same in every path in $\Pi_{X:Y}$. It just has to reach $X_1$ through an edge $\ra$ or $\aa$ in every path in $\Pi_{X:Y}$.\label{foo:subpath}} Then,
\[
\sigma_{X Y \cdot Z} = \sigma_{X Y} \prod_{i=1}^{m+n} \frac{\sigma^2_{X_i \cdot Z_{1:i}^{1:i}}}{\sigma^2_{X_i \cdot Z_{1:i-1}^{1:i}}}.
\]
\end{theorem}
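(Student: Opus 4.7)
The plan is to reduce Theorem \ref{the:condpathnonroot} to Theorem \ref{the:condpathroot} by inserting a latent variable that makes the paths in $\Pi_{X:Y}$ fit the root-subpath template of Theorem \ref{the:condpathroot}. I handle the three subpath types in turn.

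For the first subpath type, $X_m \la \cdots \la X_1 \aa X_{m+1} \ra \cdots \ra X_{m+n}$, I modify the path diagram by deleting the bidirected edge $X_1 \aa X_{m+1}$ and inserting a new latent node $U$ with directed edges $U \ra X_1$ and $U \ra X_{m+1}$ of unit coefficient, choosing the variance of $U$ and the new error variances of $X_1$ and $X_{m+1}$ so that the joint distribution of every original node is unchanged. This guarantees that $\sigma_{XY}$, $\sigma_{XY \cdot Z}$, and every partial variance of original nodes are preserved. In the modified diagram, each path in $\Pi_{X:Y}$ becomes $X_m \la \cdots \la X_1 \la U \ra X_{m+1} \ra \cdots \ra X_{m+n}$, which matches the first subpath type of Theorem \ref{the:condpathroot} with $U$ as the root. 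The hypotheses transfer cleanly: no colliders are created, $Z$-openness is preserved because $U \notin Z$, and the no-route condition is immediate at $U$ since $U$ has no incoming edges.

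The crucial step is to verify that Theorem \ref{the:condpathroot}'s formula for the modified diagram collapses to Theorem \ref{the:condpathnonroot}'s formula. Writing $\tilde Z^i, \tilde Z_i$ for the sets defined in the modified diagram and using Theorem \ref{the:condpathroot}'s indexing (so $U$ is the new ``$X_1$'' and the original $X_j$ is the new ``$X_{j+1}$''), I claim $\tilde Z^1 = \tilde Z_1 = \emptyset$, because $U$ has no parents or spouses and its only children $X_1, X_{m+1}$ both lie in $\Pi_{X:Y}$ and are therefore excluded by the clause ``the path does not contain any node that is in some path in $\Pi_{X:Y}$ except $X_i$''. Hence the leading factor $\sigma^2_{U \cdot \tilde Z_1^1}/\sigma^2_U$ equals $1$. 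For the original $X_1$ (indexed as the new ``$X_2$'') one verifies $\tilde Z^2 = Z^1$ and $\tilde Z_2 = Z_1$: the only structural change at $X_1$ is that $X_{m+1}$ is no longer a spouse but is still reached via $U$, and any such detour through $U$ would enter $X_{m+1} \in \Pi_{X:Y}$ and is therefore excluded. The ``$i = 2$'' factor of Theorem \ref{the:condpathroot} thus becomes
\[
\frac{\sigma^2_{X_1 \cdot Z_1^1}}{\sigma^2_{X_1 \cdot Z^1}},
\]
which matches the $i = 1$ factor of Theorem \ref{the:condpathnonroot}. All subsequent factors for $X_2, \ldots, X_{m+n}$ transfer verbatim.

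The second subpath type $X_1 \aa X_2 \ra \cdots \ra X_{m+n}$ is handled identically by inserting $U$ as a latent common parent of $X_1$ and $X_2$. For the third subpath type $\oa X_1 \ra \cdots \ra X_{m+n}$ no insertion is needed, since the predecessor of $X_1$ already supplies a node with an outgoing edge into $X_1$; one applies Theorem \ref{the:condpathroot} to an extended subpath that reaches the effective root via this predecessor. The hardest part will be this third case, because the footnote permits the predecessor to vary across paths in $\Pi_{X:Y}$, so a single invocation of Theorem \ref{the:condpathroot} may not cover $\Pi_{X:Y}$ simultaneously. My strategy is to partition $\Pi_{X:Y}$ by predecessor type, apply Theorem \ref{the:condpathroot} to each subfamily, and combine using the linearity of $\sigma_{XY}$ in paths; the product of partial variance ratios is identical across subfamilies because it depends only on the $X_i$ and on $Z$, so the common factor pulls out and the individual $\sigma_{XY}$ contributions sum to the full $\sigma_{XY}$. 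The residual bookkeeping is straightforward since latent nodes are never in $Z$.
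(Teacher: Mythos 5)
Your reduction works, at best, only for the first two subpath types; the third type, $\oa X_1 \ra \cdots \ra X_{m+n}$ with a predecessor that may differ from path to path (and may reach $X_1$ via $\aa$ rather than $\ra$), is the essential new content of Theorem \ref{the:condpathnonroot}, and your treatment of it has a genuine gap. You propose to partition $\Pi_{X:Y}$ by predecessor, apply Theorem \ref{the:condpathroot} to each subfamily, and add the contributions ``by linearity of $\sigma_{XY}$ in paths.'' But Theorem \ref{the:condpathroot} is stated and proved under the hypothesis that $\Pi_{X:Y}$ is the set of \emph{all} $Z$-open paths from $X$ to $Y$, and its conclusion relates the global quantities $\sigma_{XY}$ and $\sigma_{XY\cdot Z}$; there is no per-subfamily analogue of $\sigma_{XY\cdot Z}$ to which it could be applied, and no result in the paper decomposes $\sigma_{XY\cdot Z}$ path by path (the Discussion explicitly lists such a path-wise correction as an open problem). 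Moreover, even within one subfamily the predecessor need not be a root: it may be a spouse of $X_1$, or a parent that itself has incoming edges, so ``extending the subpath to the predecessor'' does not produce the root-subpath template of Theorem \ref{the:condpathroot}. The paper instead proves Theorem \ref{the:condpathnonroot} by rerunning the sequential-conditioning argument of Theorem \ref{the:condpathroot}, using Lemmas \ref{lem:nonroot1}--\ref{lem:nonroot4} at $X_1$ (whose statements are written precisely for the varying-predecessor reading of $\oa R \ra$), so that nodes in $Z^1$ give no correction and nodes in $Z_1$ give the ratio $\sigma^2_{X_1\cdot Z^1_1}/\sigma^2_{X_1\cdot Z^1}$; your reduction cannot reproduce this case.

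For the first two subpath types, replacing $X_1 \aa X_{m+1}$ by $X_1 \la U \ra X_{m+1}$ with a fresh latent $U$ is a reasonable idea (modulo checking that the coefficients and the reduced error variances can be chosen positive-definite, which holds when the error covariance matrix is nondegenerate), and your identifications $\tilde Z^1=\tilde Z_1=\emptyset$, $\tilde Z^2=Z^1$, $\tilde Z_2=Z_1$ are plausible. However, ``the hypotheses transfer cleanly'' is not quite right: after re-indexing, the old $X_1$ becomes the node with index $2$, so Theorem \ref{the:condpathroot} in the modified diagram demands that there be no $Z$-open route $X_1 \ra A \oo \cdots \oo B \oa X_1$ -- a condition that the assumptions of Theorem \ref{the:condpathnonroot}, which only constrain $X_i$ with $i>1$ in the original indexing, do not literally grant. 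You need to either argue that this condition is implied in your setting or flag it as an additional assumption of your reduction; as written, the verification that lets you invoke Lemmas \ref{lem:nonroot1}--\ref{lem:nonroot4} (equivalently, the no-route hypothesis at the relabelled node) is silently assumed.
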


Note that the numerator and the denominator of each partial variance ratio in the theorems above coincide except for $Z_i$ that is only in the numerator. Then, each ratio can be interpreted as a deflation factor ($\leq 1$) that accounts for the additional reduction of the partial variance of $X_i$ when conditioning on $Z_{i}$. Note that $\sigma_{X Y \cdot Z}$ inherits from $\sigma_{X Y}$ the salient feature of factorizing over the edges in the paths in $\Pi_{X:Y}$. Note also that the theorems always apply when the original path diagram is singly-connected. Finally, note that conditioning does not change the sign of the covariance, i.e. $sign(\sigma_{X Y \cdot Z}) = sign(\sigma_{X Y})$. This implies that if all the paths in $\Pi_{XY}$ are of the form $X \ra \cdots \ra Y$, then conditioning does not change the sign of the regression coefficient of $Y$ on $X$ and, thus, of the causal effect of $X$ on $Y$. The following corollary is immediate.

\begin{corollary}\label{cor:samesign}
Suppose that two sets of nodes $S_1$ and $S_2$ satisfy the assumptions in Theorems \ref{the:condpathroot} or \ref{the:condpathnonroot}. Then, $sign(\sigma_{XY}) = sign(\sigma_{XY \cdot S_1})=sign(\sigma_{XY \cdot S_2})$.
\end{corollary}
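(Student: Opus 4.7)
The plan is to observe that both Theorems \ref{the:condpathroot} and \ref{the:condpathnonroot} express $\sigma_{XY \cdot Z}$ (with $Z = S \cup S'$) as $\sigma_{XY}$ multiplied by a product of ratios of (partial) variances. Since any (partial) variance is non-negative, and strictly positive whenever the underlying Gaussian distribution is non-degenerate, each such ratio is well defined and non-negative. Consequently the correction factor relating $\sigma_{XY \cdot Z}$ to $\sigma_{XY}$ is non-negative, which gives $sign(\sigma_{XY \cdot Z}) = sign(\sigma_{XY})$.

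First I would apply this observation to $S_1$: letting $S_1'$ denote the new nodes produced by conditioning the path diagram on $S_1$, the discussion in Section \ref{sec:main} gives $\sigma_{XY \cdot S_1} = \sigma_{XY \cdot S_1 S_1'}$, and the applicable theorem then yields $sign(\sigma_{XY \cdot S_1}) = sign(\sigma_{XY})$. Repeating the argument verbatim with $S_2$ in place of $S_1$ yields $sign(\sigma_{XY \cdot S_2}) = sign(\sigma_{XY})$. Chaining the two equalities produces the claimed $sign(\sigma_{XY}) = sign(\sigma_{XY \cdot S_1}) = sign(\sigma_{XY \cdot S_2})$.

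The only subtle point is the strict positivity of the partial variances in the denominators of the product formulas, which is what guarantees that each ratio is well defined and unambiguously non-negative; this is immediate under the standard non-degeneracy assumption for the Gaussian random vector associated with the linear SEM. Beyond that, the corollary is a direct consequence of the factorisations supplied by Theorems \ref{the:condpathroot} and \ref{the:condpathnonroot}, with no further path-diagram reasoning required.
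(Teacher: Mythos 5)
Your proposal is correct and matches the paper's intended argument: the paper treats the corollary as immediate precisely because each theorem writes $\sigma_{XY\cdot Z}$ as $\sigma_{XY}$ times a product of nonnegative partial-variance ratios, so conditioning cannot flip the sign, and this is applied to $S_1$ and $S_2$ separately via the identification $\sigma_{XY\cdot S}=\sigma_{XY\cdot S S'}$ in the conditioned diagram. Your extra remark on strict positivity of the denominators under non-degeneracy is a fine clarification but does not change the route.
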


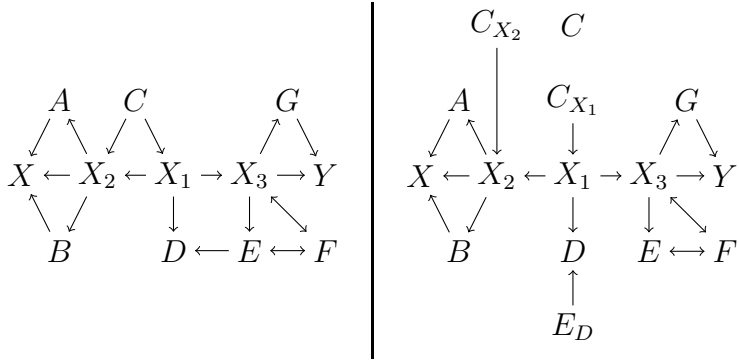
\begin{figure}
\begin{tabular}{c|c}
\begin{tabular}{c}
\begin{tikzpicture}[inner sep=1mm]
\node at (0,0) (X) {$X$};
\node at (1,0) (X1) {$X_2$};
\node at (2,0) (X2) {$X_1$};
\node at (3,0) (X3) {$X_3$};
\node at (4,0) (Y) {$Y$};
\node at (3.5,1) (G) {$G$};
\node at (0.5,1) (A) {$A$};
\node at (0.5,-1) (B) {$B$};
\node at (4,-1) (C) {$F$};
\node at (1.5,1) (S1) {$C$};
\node at (2,-1) (S2) {$D$};
\node at (3,-1) (S3) {$E$};
\path[<-] (X) edge (X1);
\path[<-] (X1) edge (X2);
\path[->] (X2) edge (X3);
\path[->] (X3) edge (Y);
\path[<-] (X) edge (A);
\path[<-] (A) edge (X1);
\path[<-] (B) edge (X1);
\path[->] (B) edge (X);
\path[<->] (X3) edge (C);
\path[<->] (C) edge (S3);
\path[->] (S1) edge (X1);
\path[->] (S1) edge (X2);
\path[->] (X2) edge (S2);
\path[->] (S3) edge (S2);
\path[<-] (S3) edge (X3);
\path[->] (X3) edge (G);
\path[->] (G) edge (Y);
\end{tikzpicture}
\end{tabular}
&
\begin{tabular}{c}
\begin{tikzpicture}[inner sep=1mm]
\node at (0,0) (X) {$X$};
\node at (1,0) (X1) {$X_2$};
\node at (2,0) (X2) {$X_1$};
\node at (3,0) (X3) {$X_3$};
\node at (4,0) (Y) {$Y$};
\node at (3.5,1) (G) {$G$};
\node at (0.5,1) (A) {$A$};
\node at (0.5,-1) (B) {$B$};
\node at (4,-1) (C) {$F$};
\node at (2,2) (D) {$C$};
\node at (1,2) (S1X1) {$C_{X_2}$};
\node at (2,1) (S1X2) {$C_{X_1}$};
\node at (2,-1) (S2) {$D$};
\node at (3,-1) (S3) {$E$};
\node at (2,-2) (S3S2) {$E_{D}$};
\path[<-] (X) edge (X1);
\path[<-] (X1) edge (X2);
\path[->] (X2) edge (X3);
\path[->] (X3) edge (Y);
\path[<-] (X) edge (A);
\path[<-] (A) edge (X1);
\path[<-] (B) edge (X1);
\path[->] (B) edge (X);
\path[<->] (X3) edge (C);
\path[<->] (C) edge (S3);
\path[->] (S1X1) edge (X1);
\path[->] (S1X2) edge (X2);
\path[->] (X2) edge (S2);
\path[->] (S3S2) edge (S2);
\path[<-] (S3) edge (X3);
\path[->] (X3) edge (G);
\path[->] (G) edge (Y);
\end{tikzpicture}
\end{tabular}
\end{tabular}\caption{Left: Path diagram where Theorem \ref{the:condpathroot} can be applied to compute $\sigma_{XY \cdot C D E}$. Right: The path diagram to the left conditioned on $\{C, D, E\}$.}\label{fig:example}
\end{figure}

\subsection{Examples}

We illustrate Theorem \ref{the:condpathroot} with the following example. Consider the path diagram to the left in Figure \ref{fig:example}. Say that we want to compute $\sigma_{XY \cdot S}$ with $S=\{C,D,E\}$. The path diagram conditioned on $S$ can be seen to the right in Figure \ref{fig:example}. Then, $S'=\{C_{X_1}, C_{X_2}, E_D\}$ and $Z = S \cup S' = \{C,D,E,C_{X_1}, C_{X_2}, E_D\}$. As discussed before, $\sigma_{XY \cdot S}$ in the original diagram coincides with $\sigma_{X Y \cdot Z}$ in the conditional diagram. Now, note that $Z^1=\{C_{X_1}\}$, $Z_1=\{D,E_D\}$, $Z^2=\{C_{X_2}\}$, $Z_2=\emptyset$, $Z^3=\emptyset$, and $Z_3=\{E\}$. Then, Theorem \ref{the:condpathroot} gives
\[
\sigma_{X Y \cdot Z} = \sigma_{X Y} \frac{\sigma^2_{X_1 \cdot C_{X_1} D E_D}}{\sigma^2_{X_1}} \frac{\sigma^2_{X_2 \cdot C_{X_1} D E_D C_{X_2}}}{\sigma^2_{X_2 \cdot C_{X_1} D E_D C_{X_2}}} \frac{\sigma^2_{X_3 \cdot C_{X_1} D E_D C_{X_2} E}}{\sigma^2_{X_3 \cdot C_{X_1} D E_D C_{X_2}}}.
\]

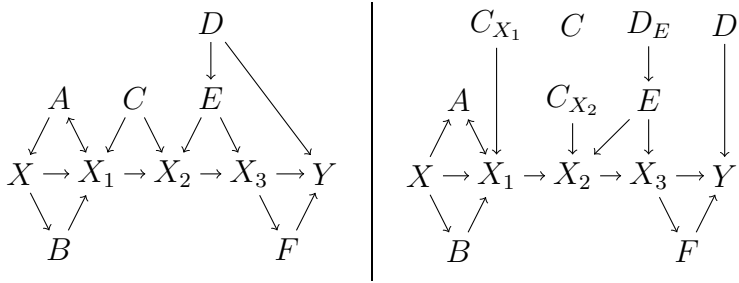
\begin{figure}
\begin{tabular}{c|c}
\begin{tabular}{c}
\begin{tikzpicture}[inner sep=1mm]
\node at (0,0) (X) {$X$};
\node at (1,0) (X1) {$X_1$};
\node at (2,0) (X2) {$X_2$};
\node at (3,0) (X3) {$X_3$};
\node at (4,0) (Y) {$Y$};
\node at (0.5,1) (A) {$A$};
\node at (0.5,-1) (B) {$B$};
\node at (1.5,1) (C) {$C$};
\node at (2.5,2) (D) {$D$};
\node at (2.5,1) (E) {$E$};
\node at (3.5,-1) (F) {$F$};
\path[->] (X) edge (X1);
\path[->] (X1) edge (X2);
\path[->] (X2) edge (X3);
\path[->] (X3) edge (Y);
\path[<-] (X) edge (A);
\path[<->] (A) edge (X1);
\path[->] (B) edge (X1);
\path[<-] (B) edge (X);
\path[->] (C) edge (X1);
\path[->] (C) edge (X2);
\path[->] (E) edge (X2);
\path[->] (E) edge (X3);
\path[->] (D) edge (E);
\path[->] (D) edge (Y);
\path[->] (X3) edge (F);
\path[->] (F) edge (Y);
\end{tikzpicture}
\end{tabular}
&
\begin{tabular}{c}
\begin{tikzpicture}[inner sep=1mm]
\node at (0,0) (X) {$X$};
\node at (1,0) (X1) {$X_1$};
\node at (2,0) (X2) {$X_2$};
\node at (3,0) (X3) {$X_3$};
\node at (4,0) (Y) {$Y$};
\node at (0.5,1) (A) {$A$};
\node at (0.5,-1) (B) {$B$};
\node at (2,2) (D) {$C$};
\node at (1,2) (S1X1) {$C_{X_1}$};
\node at (2,1) (S1X2) {$C_{X_2}$};
\node at (4,2) (D) {$D$};
\node at (3,2) (DE) {$D_E$};
\node at (3,1) (E) {$E$};
\node at (3.5,-1) (F) {$F$};
\path[->] (X) edge (X1);
\path[->] (X1) edge (X2);
\path[->] (X2) edge (X3);
\path[->] (X3) edge (Y);
\path[->] (X) edge (A);
\path[<->] (A) edge (X1);
\path[->] (B) edge (X1);
\path[<-] (B) edge (X);
\path[->] (S1X1) edge (X1);
\path[->] (S1X2) edge (X2);
\path[->] (E) edge (X2);
\path[->] (E) edge (X3);
\path[->] (D) edge (Y);
\path[->] (DE) edge (E);
\path[->] (X3) edge (F);
\path[->] (F) edge (Y);
\end{tikzpicture}
\end{tabular}
\end{tabular}\caption{Left: Path diagram where Theorem \ref{the:condpathnonroot} can be applied to compute $\sigma_{XY \cdot C D}$. Right: The path diagram to the left conditioned on $\{C, D\}$.}\label{fig:example2}
\end{figure}

We also illustrate Theorem \ref{the:condpathnonroot} with an example. Consider the path diagram to the left in Figure \ref{fig:example2}. Say that we want to compute $\sigma_{XY \cdot S}$ with $S=\{C,D\}$. The path diagram conditioned on $S$ can be seen to the right in Figure \ref{fig:example2}. Then, $S'=\{C_{X_1}, C_{X_2}, D_E\}$ and $Z = S \cup S' = \{C,D,C_{X_1}, C_{X_2}, D_E\}$. As discussed before, $\sigma_{XY \cdot S}$ in the original diagram coincides with $\sigma_{X Y \cdot Z}$ in the conditional diagram. Now, note that $Z^1=\{C_{X_1}\}$, $Z^2=\{C_{X_2}, D_E\}$, and $Z_1=Z_2=Z_3=Z^3=\emptyset$. Then, Theorem \ref{the:condpathnonroot} gives
\[
\sigma_{X Y \cdot Z} = \sigma_{X Y} \frac{\sigma^2_{X_1 \cdot C_{X_1}}}{\sigma^2_{X_1 \cdot C_{X_1}}} \frac{\sigma^2_{X_2 \cdot C_{X_1} C_{X_2} D_E}}{\sigma^2_{X_2 \cdot C_{X_1} C_{X_2} D_E}} \frac{\sigma^2_{X_3 \cdot C_{X_1} C_{X_2} D_E}}{\sigma^2_{X_3 \cdot C_{X_1} C_{X_2} D_E}}.
\]

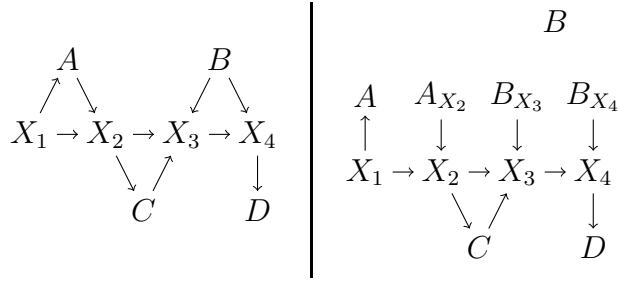
\begin{figure}
\begin{tabular}{c|c}
\begin{tabular}{c}
\begin{tikzpicture}[inner sep=1mm]
\node at (0,0) (X) {$X_1$};
\node at (1,0) (X1) {$X_2$};
\node at (2,0) (X2) {$X_3$};
\node at (3,0) (Y) {$X_4$};
\node at (0.5,1) (A) {$A$};
\node at (1.5,-1) (B) {$C$};
\node at (2.5,1) (C) {$B$};
\node at (3,-1) (D) {$D$};
\path[->] (X) edge (X1);
\path[->] (X1) edge (X2);
\path[->] (X2) edge (Y);
\path[<-] (X1) edge (A);
\path[<-] (A) edge (X);
\path[->] (C) edge (X2);
\path[->] (C) edge (Y);
\path[->] (X1) edge (B);
\path[->] (B) edge (X2);
\path[->] (Y) edge (D);
\end{tikzpicture}
\end{tabular}
&
\begin{tabular}{c}
\begin{tikzpicture}[inner sep=1mm]
\node at (0,0) (X) {$X_1$};
\node at (1,0) (X1) {$X_2$};
\node at (2,0) (X2) {$X_3$};
\node at (3,0) (Y) {$X_4$};
\node at (0,1) (A) {$A$};
\node at (1,1) (AX2) {$A_{X_2}$};
\node at (1.5,-1) (B) {$C$};
\node at (2.5,2) (C) {$B$};
\node at (2,1) (CX2) {$B_{X_3}$};
\node at (3,1) (CY) {$B_{X_4}$};
\node at (3,-1) (D) {$D$};
\path[->] (X) edge (X1);
\path[->] (X1) edge (X2);
\path[->] (X2) edge (Y);
\path[<-] (X1) edge (AX2);
\path[<-] (A) edge (X);
\path[->] (X1) edge (B);
\path[->] (B) edge (X2);
\path[->] (CX2) edge (X2);
\path[->] (CY) edge (Y);
\path[->] (Y) edge (D);
\end{tikzpicture}
\end{tabular}
\end{tabular}\caption{Left: Path diagram where Theorems \ref{the:condpathroot} and \ref{the:condpathnonroot} can be combined to compute $\sigma_{X_1 X_4 \cdot A B D}$. Left: The path diagram to the left conditioned on $\{A, B, D\}$.}\label{fig:example3}
\end{figure}

Theorems \ref{the:condpathroot} and \ref{the:condpathnonroot} can be extended to when all the paths from $X$ to $Y$ in the conditional path diagram share more than one subpath. For instance, consider the path diagram to the left in Figure \ref{fig:example3}. Say that we want to compute $\sigma_{X_1 X_4 \cdot S}$ with $S=\{A, B, D\}$. The path diagram conditioned on $S$ can be seen to the right in Figure \ref{fig:example3}. Then, $S'=\{A_{X_2}, B_{X_3}, B_{X_4}\}$ and $Z = S \cup S' = \{A, B, D, A_{X_2}, B_{X_3}, B_{X_4}\}$. As discussed before, $\sigma_{X_1 X_4 \cdot S}$ in the original diagram coincides with $\sigma_{X_1 X_4 \cdot Z}$ in the conditional diagram. Now, note that $Z_1=\{A\}$, $Z^2=\{A_{X_2}\}$, $Z^3=\{B_{X_3}\}$, $Z^4=\{B_{X_4}\}$, $Z_4=\{D\}$, and $Z^1=Z_2=Z_3=\emptyset$. Note also that the conditional diagram has two $Z$-open paths from $X_1$ to $X_4$, which share two subpaths: $X_1 \ra X_2$ and $\ra X_3 \ra X_4$. Therefore, neither Theorem \ref{the:condpathroot} nor \ref{the:condpathnonroot} applies. However, applying Theorem \ref{the:condpathroot} followed by Theorem \ref{the:condpathnonroot} gives
\begin{equation}\label{eq:example3}
\sigma_{X_1 X_4 \cdot Z} = \sigma_{X_1 X_4} \frac{\sigma^2_{X_1 \cdot A}}{\sigma^2_{X_1}} \frac{\sigma^2_{X_2 \cdot A A_{X_2}}}{\sigma^2_{X_2 \cdot A A_{X_2}}} \frac{\sigma^2_{X_3 \cdot A A_{X_2} B_{X_3}}}{\sigma^2_{X_3 \cdot A A_{X_2} B_{X_3}}} \frac{\sigma^2_{X_4 \cdot A A_{X_2} B_{X_3} B_{X_4} D}}{\sigma^2_{X_4 \cdot A A_{X_2} B_{X_3} B_{X_4}}}.
\end{equation}
The proof of correctness of the previous expression is simply a concatenation of the proofs of Theorems \ref{the:condpathroot} and \ref{the:condpathnonroot}. We omit the details. An alternative way of answering the previous query is by first absorbing the subpath $X_2 \ra C \ra X_3$ into the subpath $X_2 \ra X_3$. Now, there is only one shared subpath in the path diagram conditioned on $S$, namely $X_1 \ra X_2 \ra X_3 \ra X_4$. Then, Theorem \ref{the:condpathroot} gives Equation \ref{eq:example3}. This absorption trick is always possible when there are several shared subpaths. We omit the details.

\section{Simpson's Paradox}\label{sec:Simpson}

In this section, we use Theorems \ref{the:condpathroot} and \ref{the:condpathnonroot} to conclude that Simpson's paradox cannot occur in certain path diagrams. For path diagrams, Simpson's paradox can be described as the reversal of the sign of the regression coefficient of a random variable $Y$ on a second variable $X$ upon conditioning on a set of variables $S$. \citet[Section 3.1]{Pearl2013} shows that the paradox can well occur for the path diagram $X \ra Y \la S \ra X$. Note the diagram is not singly-connected. \citet[Section 2.2]{Pearl2014} argues that the paradox does not occur for the singly-connected path diagrams $S \la X \ra Y$, $S \ra X \ra Y$, and $X \ra Y \la S$, because the association between $X$ and $Y$ is collapsible over $S$. However, the correctness of this statement depends on the definition of association. To see it, recall from \citet[Definition 6.5.1]{Pearl2009} that given a functional $g(p(x,y))$ that measures the association between two random variables $Y$ and $X$ in $p(x,y)$, we say that $g$ is collapsible over a variable $S$ if
\[
E_s[g(p(x,y|s))] = g(p(x,y)).
\]
If we now consider the diagram $S \la X \ra Y$ and let $g$ be the covariance between $Y$ and $X$, then collapsibility does not hold since
\begin{align*}
E_s[g(p(x,y|s))] = E_s[cov(X,Y|S=s)] &= cov(X,Y|S)\\
& = \sigma_{XY \cdot S} \neq \sigma_{XY} = g(p(x,y))
\end{align*}
where the second equality follows from the fact that the conditional covariance does not depend on the value of the conditioning set, and the inequality follows from Theorem \ref{the:condpathroot}.\footnote{For $S \la X \ra Y$ and $S \ra X \ra Y$, Theorem \ref{the:condpathroot} and $X \nci S | \emptyset$ give $\sigma_{XY \cdot S} = \sigma_{XY} ( \sigma^2_{X \cdot S} / \sigma^2_X ) ( \sigma^2_{Y \cdot S} / \sigma^2_{Y \cdot S} ) \neq \sigma_{XY}$.} Similarly for the diagram $S \ra X \ra Y$. For the diagram $X \ra Y \la S$, on the other hand, Theorem \ref{the:condpathroot} implies collapsibility.\footnote{For $X \ra Y \la S$, Theorem \ref{the:condpathroot} gives $\sigma_{XY \cdot S} = \sigma_{XY} ( \sigma^2_{X} / \sigma^2_{X} ) ( \sigma^2_{Y \cdot S} / \sigma^2_{Y \cdot S} ) = \sigma_{XY}$.} If we instead let $g$ be the regression coefficient of $Y$ on $X$, then collapsibility follows from Theorem \ref{the:condpathroot} for the three diagrams under consideration.\footnote{For $S \la X \ra Y$ and $S \ra X \ra Y$, Theorem \ref{the:condpathroot} gives $\beta_{YX \cdot S} = \sigma_{XY \cdot S} / \sigma^2_{X \cdot S} = \sigma_{XY} / \sigma^2_X = \beta_{YX}$. For $X \ra Y \la S$, Theorem \ref{the:condpathroot} and $X \ci S | \emptyset$ give $\beta_{YX \cdot S} = \sigma_{XY \cdot S} / \sigma^2_{X \cdot S} = \sigma_{XY} / \sigma^2_{X} = \beta_{YX}$.} Moreover, \citet{Pearl2014} does not discuss if Simpson's paradox can occur for the diagram $X \ra Y \ra S$. In fact, we can use Theorem \ref{the:condpathroot} again to conclude that collapsibility does not hold for this diagram, regardless of whether association means covariance or regression coefficient.\footnote{For $X \ra Y \ra S$, Theorem \ref{the:condpathroot} and $Y \nci S | \emptyset$ give $\sigma_{XY \cdot S} = \sigma_{XY} ( \sigma^2_{X} / \sigma^2_{X} ) ( \sigma^2_{Y \cdot S} / \sigma^2_{Y} ) \neq \sigma_{XY}$. Moreover, $\beta_{YX \cdot S} = \sigma_{XY \cdot S} / \sigma^2_{X \cdot S} = ( \sigma_{XY} / \sigma^2_{X \cdot S} )( \sigma^2_{Y \cdot S} / \sigma^2_{Y} ) \neq \sigma_{XY} / \sigma^2_X = \beta_{YX}$, because $X \nci S | \emptyset$ and $Y \nci S | \emptyset$.} Pearl does not discuss either the case of singly-connected path diagrams where $X$ and $Y$ are connected by a path of length greater than one. We fill these gaps next.

Note that Simpson's paradox concerns the sign of the regression coefficient of $Y$ on $X$ upon conditioning on $S$ or, equivalently, it concerns the sign of the covariance between $X$ and $Y$ upon conditioning on $S$. Therefore, we are interested in the collapsibility of their sign rather than in the collapsibility of the regression coefficient or the covariance themselves. Corollary \ref{cor:samesign} implies that conditioning on $S$ does not change the sign of the covariance if $S$ satisfies the conditions in Theorem \ref{the:condpathroot} or \ref{the:condpathnonroot}. Consequently, Simpson's paradox cannot occur in those cases which, recall from Section \ref{sec:main}, include singly-connected path diagrams.

\section{Discussion}\label{sec:discussion}

In this work, we have extended path analysis by giving sufficient conditions for computing the partial covariance of two random variables from their covariance. This is done by correcting the covariance with the product of some partial variance ratios. These ratios can be interpreted as deflation factors that account for the reduction of the partial variances of the variables in the paths between the two variables of interest. As a result, the partial covariance retains the covariance's salient feature of factorizing over the edges in the paths. Moreover, we have used these results to show that Simpson's paradox cannot occur under the sufficient conditions developed.

\begin{figure}
\begin{tabular}{c|c}
\begin{tabular}{c}
\begin{tikzpicture}[inner sep=1mm]
\node at (0,0) (X) {$X$};
\node at (1.5,0) (S) {$R$};
\node at (3,0) (Y) {$Y$};
\node at (1.5,-1.5) (W) {$S$};
\path[<-] (W) edge (S);
\path[->] (S) edge (Y);
\path[->] (S) edge [bend right] (X);
\path[<->] (S) edge [bend left] (X);
\end{tikzpicture}
\end{tabular}
&
\begin{tabular}{c}
\begin{tikzpicture}[inner sep=1mm]
\node at (0,0) (X) {$X$};
\node at (1.5,0) (S) {$R$};
\node at (3,0) (Y) {$Y$};
\node at (1.5,-1.5) (W) {$S$};
\path[->] (X) edge (S);
\path[->] (S) edge (Y);
\path[->] (S) edge [bend right] (W);
\path[<->] (S) edge [bend left] (W);
\end{tikzpicture}
\end{tabular}
\end{tabular}\caption{Path diagrams conditioned on $S$ where Theorems \ref{the:condpathroot} and \ref{the:condpathnonroot} cannot be applied to compute $\sigma_{XY \cdot S}$.}\label{fig:discussion}
\end{figure}
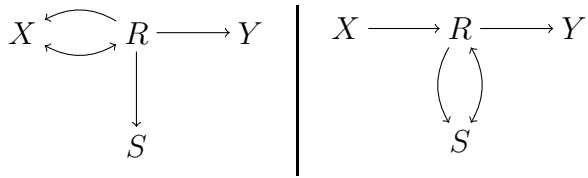

In the future, we would like to relax these sufficient conditions. Specifically, we would like to address the following two limitations of Theorems \ref{the:condpathroot} and \ref{the:condpathnonroot}. A limitation is that each node $X_i$ in the subpath shared by all the paths in $\Pi_{X:Y}$ must be either the root or a non-root in all the paths in $\Pi_{X:Y}$. As a consequence, the theorems do not apply to the path diagram conditioned on $S$ that is shown to the left in Figure \ref{fig:discussion}, because $R$ is the root of the path $X \la R \ra Y$ but it is a non-root node in the path $X \aa R \ra Y$. Ideally, we would like to apply Theorem \ref{the:condpathroot} to just the first path, and Theorem \ref{the:condpathnonroot} to just the second. This would imply a correction due to the first path and no correction due to the second (the partial variance ratio equals 1). Developing such a path-wise correction is an open question.

A related limitation of Theorems \ref{the:condpathroot} and \ref{the:condpathnonroot} is that there cannot be any $Z$-open route $X_i \ra A \oo \cdots \oo B \oa X_i$ with $i>1$. As a consequence, the theorems do not apply to the path diagram conditioned on $S$ that is shown to the right in Figure \ref{fig:discussion}, because $S$ is both a child and a spouse of $R$. Ideally, we would like to apply Theorem \ref{the:condpathroot} to account for $S$ as a spouse, and Theorem \ref{the:condpathnonroot} to account for $S$ as a child. This would imply no correction in the first case (the partial variance ratio equals 1) and a correction in the second case. Developing such a role-wise correction is an open question.

\section*{Appendix: Proofs}

This appendix contains the proofs of Theorems \ref{the:condpathroot} and \ref{the:condpathnonroot}. We start with some lemmas stating some auxiliary results. The proofs of the lemmas contain some repetition. We decided to keep it this way for the sake of clarity. Recall from Footnote \ref{foo:subpath} that when we say that every path has a subpath $A \oa$, we do not mean that the successor of $A$ is the same in every path. We mean that the successor is reached through an edge $\ra$ or $\aa$ in every path. Given a route $\rho_{X:Y}$ from $X$ to $Y$, we let $\rho_{X:A}$ denote the subroute of $\rho_{X:Y}$ from $X$ to $A$. Given two routes $\rho_{X:A}$ and $\rho_{A:Y}$, we let $\rho_{X:A} \cup \rho_{A:Y}$ denote the route from $X$ to $Y$ resulting from concatenating $\rho_{X:A}$ and $\rho_{A:Y}$. Finally, the path corresponding to a $Z$-open route from $X$ to $Y$ is a $Z$-open path from $X$ to $Y$ whose edges are a subset of the edges in the route. Such a path always exists by Lemma \ref{lem:pathroute}.

\begin{lemma}\label{lem:pathroute}
There is a $Z$-open route from $X$ to $Y$ if and only if there is a $Z$-open path from $X$ to $Y$. Moreover, the path and the route can be chosen such that the edges in the former are a subset of the edges in the latter.
\end{lemma}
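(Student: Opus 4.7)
Plan: Prove the two directions separately; the ``moreover'' clause is delivered by the route-to-path direction, where each shortcut only removes edges from the given route.

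For the easier ``path $\Rightarrow$ route'' direction, take a $Z$-open path $\pi$ and insert a detour at every collider $C$ of $\pi$ with $C \notin Z$. By the $Z$-openness of $\pi$, such $C$ has a descendant in $Z$; choose a minimal directed path $C \ra C_1 \ra \cdots \ra C_n = D$ with $D \in Z$, so all intermediate $C_k$ lie outside $Z$. Replace the visit of $C$ in $\pi$ by the closed walk $C \ra C_1 \ra \cdots \ra D \ra C_{n-1} \ra \cdots \ra C_1 \ra C$. A direct check of arrowheads shows $D$ is a collider (at the turnaround, both arrowheads pointing in), each $C_k$ is a non-collider on both of its visits, and the two occurrences of $C$ bordering the original path edges are also non-colliders; thus the walk is a $Z$-open route whose edges include those of $\pi$.

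For the ``route $\Rightarrow$ path'' direction, I induct on the number of edges of the route $\rho$. It is convenient to strengthen the inductive claim to walks that are $Z$-open in the \emph{path sense}, i.e., every collider is in $Z$ or has a descendant in $Z$ and every non-collider lies outside $Z$; every $Z$-open route satisfies this, and such a walk with no repeated nodes is precisely a $Z$-open path. If a repeated node $A$ exists, pick its first and last occurrences $A_i, A_j$ and form the shortcut $\rho' = \rho_{X:A_i} \cup \rho_{A_j:Y}$. This $\rho'$ has strictly fewer edges and its edge set is contained in that of $\rho$, so the only thing to check is that $\rho'$ still satisfies the path-sense $Z$-openness; induction then yields the desired $Z$-open path.

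Since only the collider status at $A$ can change, I case-split on $A$'s status in $\rho'$. Non-collider in $\rho'$ forces $A \notin Z$ (else $A \in Z$ would make $A$ a collider at every occurrence in $\rho$, putting heads at all four adjacent arrowheads and thereby forcing $A$ to be a collider in $\rho'$, contradiction). Collider in $\rho'$ requires $A \in Z$ or a descendant of $A$ in $Z$: if $A$ was already a collider somewhere in $\rho$, the inductive property on $\rho$ gives this for free; the delicate sub-case is when $A$ is a non-collider at both $A_i$ and $A_j$ in $\rho$ but becomes a collider in $\rho'$. Then $A \notin Z$, and both boundary edges of $\rho_{A_i:A_j}$ are forced to be directed out of $A$. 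I walk along $\rho_{A_i:A_j}$ starting with $A \ra Y_1$, extending a directed path $A \ra Y_1 \ra Y_2 \ra \cdots$ as long as each $Y_{k-1}$ is a non-collider of $\rho$ (so $Y_{k-1} \ra Y_k$ is the next walk edge). Because the walk must eventually reach $A_j = A$ via an edge itself directed out of $A$, the node immediately before $A_j$ inherits two head arrowheads under the directed extension and is a collider of $\rho$; by path-sense $Z$-openness this collider is in $Z$ or has a descendant in $Z$, yielding the required descendant of $A$ in $Z$. The main obstacle is precisely this sub-case: the careful arrowhead bookkeeping needed to guarantee that the directed extension inside $\rho_{A_i:A_j}$ genuinely terminates at a collider of $\rho$ that is reachable from $A$ by a directed path. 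Once established, induction on length closes the $\Rightarrow$ direction and hence the lemma.
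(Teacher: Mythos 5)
Your proof is correct and follows essentially the same strategy as the paper's: inserting a directed detour to a descendant in $Z$ and back at each collider outside $Z$ for the path-to-route direction, and iteratively shortcutting between the first and last occurrences of a repeated node for the route-to-path direction. Your explicit strengthening to path-sense openness of walks and the directed-extension argument producing a collider that is a descendant of $A$ are exactly the details the paper compresses into its brief remark that ``$A$ or some descendant of $A$ must be in $Z$.''
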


\begin{proof}
Let $\pi_{X:Y}$ be a $Z$-open path from $X$ to $Y$. For every subpath $A \oa C \ao B$ of $\pi_{X:Y}$ such that $C \notin Z$, do the following. First, find a path $C \ra X_1 \ra \cdots \ra X_n$ with $X_n \in Z$ and $X_1, \ldots, X_{n-1} \notin Z$. Such a path must exist for $\pi_{X:Y}$ to be $Z$-open. Second, replace $A \oa C \ao B$ with $A \oa C \ra X_1 \ra \cdots \ra X_n \la \cdots \la X_1 \la C \ao B$. The result is the desired route.

Let $\rho_{X:Y}$ be a $Z$-open route from $X$ to $Y$. Repeat the following two steps while possible. The result is the desired path. First, choose a node $A$ that occurs several times in $\rho_{X:Y}$. Let $A_1$ and $A_2$ denote the first and the last occurrences of $A$ in $\rho_{X:Y}$. Assume without loss of generality that no node in $\rho_{X:A_1}$ occurs in $\rho_{A_2:Y}$ except $A$. Second, consider the following cases.
\begin{itemize}
\item If $\rho_{X:Y}$ is $X \oo \cdots \la A_1 \oo \cdots \oo A_2 \oo \cdots \oo Y$, then replace it with $\rho_{X:A_1} \cup \rho_{A_2:Y}$.

\item If $\rho_{X:Y}$ is $X \oo \cdots \oa A_1 \oo \cdots \oo A_2 \ra \cdots \oo Y$, then replace it with $\rho_{X:A_1} \cup \rho_{A_2:Y}$.

\item If $\rho_{X:Y}$ is $X \oo \cdots \oa A_1 \oo \cdots \oo A_2 \ao \cdots \oo Y$, then replace it with $\rho_{X:A_1} \cup \rho_{A_2:Y}$. Note that $A$ or some descendant of $A$ must be in $Z$ for the original $\rho_{X:Y}$ to be $Z$-open.
\end{itemize}
\end{proof}

\begin{lemma}\label{lem:aux1}
Consider a path diagram. Let $X$, $Y$, $R$ and $W$ be nodes and $Z$ a set of nodes. If $X \ci W | Z \cup R$ and $Y \ci W | Z \cup R$ and $X \ci Y | Z \cup R$, then
\[
\sigma_{XY \cdot ZW} =  \sigma_{XY \cdot Z} \frac{\sigma^2_{R \cdot ZW}}{\sigma^2_{R \cdot Z}}.
\]
\end{lemma}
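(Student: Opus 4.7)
The plan is to lean entirely on the standard one-step recursion for partial (co)variances,
\[
\sigma_{UV \cdot TS} \;=\; \sigma_{UV \cdot T} \;-\; \frac{\sigma_{US \cdot T}\,\sigma_{VS \cdot T}}{\sigma^2_{S \cdot T}},
\]
which is valid for any Gaussian vector and therefore for the linear structural equation models at hand (since, as the paper notes, partial covariances coincide with conditional covariances in this setting). Each of the three independence hypotheses says that a partial covariance with $R$ added to the conditioning set vanishes, so applying the recursion to each of them converts the hypothesis into an algebraic identity among quantities conditioned only on $Z$. A single further application of the recursion then lowers the conditioning set from $Z\cup\{R,W\}$ down to $Z\cup\{W\}$ in the target expression.

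Concretely, I would first use $X \ci Y \mid Z\cup R$ to rewrite $\sigma_{XY \cdot ZR}=0$ as
\(
\sigma_{XY \cdot Z} = \sigma_{XR \cdot Z}\,\sigma_{YR \cdot Z}/\sigma^2_{R \cdot Z},
\)
and analogously use $X \ci W \mid Z \cup R$ and $Y \ci W \mid Z \cup R$ to write $\sigma_{XW \cdot Z}$ and $\sigma_{YW \cdot Z}$ in the common form $\sigma_{\cdot R \cdot Z}\,\sigma_{WR \cdot Z}/\sigma^2_{R \cdot Z}$. I would then write the recursion
\(
\sigma_{XY \cdot ZW} = \sigma_{XY \cdot Z} - \sigma_{XW \cdot Z}\,\sigma_{YW \cdot Z}/\sigma^2_{W \cdot Z}
\)
and substitute the three expressions obtained above. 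After cancellation, the product $\sigma_{XR \cdot Z}\sigma_{YR \cdot Z}/\sigma^2_{R \cdot Z}$ can be pulled out as a common factor, which by the first identity equals $\sigma_{XY \cdot Z}$, yielding
\[
\sigma_{XY \cdot ZW} = \sigma_{XY \cdot Z}\left(1 - \frac{\sigma_{WR \cdot Z}^2}{\sigma^2_{R \cdot Z}\,\sigma^2_{W \cdot Z}}\right).
\]
Finally, I would recognize the parenthetical factor as $\sigma^2_{R \cdot ZW}/\sigma^2_{R \cdot Z}$, which is simply the same recursion specialized to $U=V=R$ and $S=W$, giving the claim.

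I do not anticipate a real obstacle: the derivation is purely algebraic, and the three hypotheses line up so that the three instances of $\sigma_{\cdot R \cdot Z}$ in numerators match the one inverse power of $\sigma^2_{R \cdot Z}$ in the final answer. The only point that deserves brief care is the degenerate case $\sigma^2_{R \cdot Z}=0$ or $\sigma^2_{W \cdot Z}=0$, where the recursion is vacuous; under the standard convention of dropping deterministic variables from the conditioning set, the identity reduces to a trivial equality and can be handled in one line.
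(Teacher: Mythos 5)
Your proposal is correct and follows essentially the same route as the paper's own proof: both turn the three independence hypotheses into identities for $\sigma_{XW \cdot Z}$, $\sigma_{YW \cdot Z}$ and $\sigma_{XY \cdot Z}$ via the one-step recursion, substitute into $\sigma_{XY \cdot ZW} = \sigma_{XY \cdot Z} - \sigma_{XW \cdot Z}\sigma_{WY \cdot Z}/\sigma^2_{W \cdot Z}$, and identify the resulting factor with $\sigma^2_{R \cdot ZW}/\sigma^2_{R \cdot Z}$. The only cosmetic difference is that the paper writes the common factor as $\delta_{XR \cdot Z}\delta_{YR \cdot Z}$ rather than pulling out $\sigma_{XY \cdot Z}$ directly.
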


\begin{proof}
Note that $X \ci W | Z \cup R$ implies that
\[
0 = \sigma_{XW \cdot ZR} = \sigma_{XW \cdot Z} - \frac{\sigma_{XR \cdot Z} \sigma_{RW \cdot Z}}{\sigma^2_{R \cdot Z}}
\]
which implies that $\sigma_{XW \cdot Z} = \delta_{XR \cdot Z} \sigma_{RW \cdot Z}$ where $\delta_{XR \cdot Z} = \sigma_{XR \cdot Z} / \sigma^2_{R \cdot Z}$. Likewise, $Y \ci W | Z \cup R$ implies that $\sigma_{YW \cdot Z} = \delta_{YR \cdot Z} \sigma_{RW \cdot Z}$ where $\delta_{YR \cdot Z} = \sigma_{YR \cdot Z} / \sigma^2_{R \cdot Z}$. Likewise, $X \ci Y | Z \cup R$ implies that $\sigma_{XY \cdot Z} = \delta_{XR \cdot Z} \delta_{YR \cdot Z} \sigma^2_{R \cdot Z}$. Therefore,
\begin{align*}
\sigma_{XY \cdot ZW} &= \sigma_{XY \cdot Z} - \frac{\sigma_{XW \cdot Z} \sigma_{WY \cdot Z}}{\sigma^2_{W \cdot Z}}\\
& = \delta_{XR \cdot Z} \delta_{YR \cdot Z} \sigma^2_{R \cdot Z} - \frac{\delta_{XR \cdot Z} \sigma_{RW \cdot Z} \delta_{YR \cdot Z} \sigma_{RW \cdot Z}}{\sigma^2_{W \cdot Z}}\\
& = \delta_{XR \cdot Z} \delta_{YR \cdot Z} \Big( \sigma^2_{R \cdot Z} - \frac{\sigma_{RW \cdot Z} \sigma_{RW \cdot Z}}{\sigma^2_{W \cdot Z}} \Big)\\
& = \delta_{XR \cdot Z} \delta_{YR \cdot Z} \sigma^2_{R \cdot ZW} =  \sigma_{XY \cdot Z} \frac{\sigma^2_{R \cdot ZW}}{\sigma^2_{R \cdot Z}}.
\end{align*}
\end{proof}

\begin{lemma}\label{lem:aux2}
Consider a path diagram. Let $X$, $Y$ and $W$ be nodes and $Z$ a set of nodes. If $Y \ci W | Z \cup X$, then
\[
\sigma_{XY \cdot ZW} =  \sigma_{XY \cdot Z} \frac{\sigma^2_{X \cdot ZW}}{\sigma^2_{X \cdot Z}}.
\]
\end{lemma}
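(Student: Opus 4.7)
The plan is to follow the same strategy as the proof of Lemma \ref{lem:aux1}, but with $X$ playing simultaneously the role of one of the marginal variables and the role of the auxiliary node that was called $R$ there. Because $X$ is itself an endpoint of the target partial covariance, only the single independence $Y \ci W | Z \cup X$ is needed, rather than the three independences required in Lemma \ref{lem:aux1}. The two algebraic tools are the standard recursive formulas for partial covariances and partial variances:
\[
\sigma_{UV \cdot ZW} = \sigma_{UV \cdot Z} - \frac{\sigma_{UW \cdot Z}\, \sigma_{WV \cdot Z}}{\sigma^2_{W \cdot Z}}
\qquad \text{and} \qquad
\sigma^2_{U \cdot ZW} = \sigma^2_{U \cdot Z} - \frac{\sigma_{UW \cdot Z}\, \sigma_{UW \cdot Z}}{\sigma^2_{W \cdot Z}}.
\]

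First, I would convert the hypothesis $Y \ci W | Z \cup X$ into the algebraic form $\sigma_{YW \cdot ZX} = 0$. Applying the covariance recursion to this vanishing partial covariance immediately yields $\sigma_{YW \cdot Z} = \delta\, \sigma_{XW \cdot Z}$, where $\delta = \sigma_{XY \cdot Z} / \sigma^2_{X \cdot Z}$ is the partial regression coefficient of $Y$ on $X$ given $Z$. This is the analogue of the relation $\sigma_{XW \cdot Z} = \delta_{XR \cdot Z}\, \sigma_{RW \cdot Z}$ used inside the proof of Lemma \ref{lem:aux1}, with $X$ playing the role of $R$.

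Next, I would apply the covariance recursion a second time to expand $\sigma_{XY \cdot ZW}$ and substitute the expression for $\sigma_{WY \cdot Z}$ just obtained. Factoring out $\delta$ and using $\sigma_{XY \cdot Z} = \delta\, \sigma^2_{X \cdot Z}$ produces
\[
\sigma_{XY \cdot ZW} = \delta \Big( \sigma^2_{X \cdot Z} - \frac{\sigma_{XW \cdot Z}\, \sigma_{XW \cdot Z}}{\sigma^2_{W \cdot Z}} \Big) = \delta\, \sigma^2_{X \cdot ZW},
\]
where the second equality is exactly the variance recursion. Substituting back the definition of $\delta$ then gives the claimed identity.

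The calculation is entirely routine and mirrors the template of Lemma \ref{lem:aux1} very closely, so I do not anticipate a genuine obstacle. The only bookkeeping point is to keep straight that $X$ now plays both the endpoint and the auxiliary role; this is automatic because only one conditional independence is available and $X$ is the only candidate for the auxiliary slot.
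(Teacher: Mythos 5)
Your proposal is correct and follows essentially the same route as the paper's proof: use $Y \ci W | Z \cup X$ to write $\sigma_{YW \cdot Z} = \sigma_{XY \cdot Z}\,\sigma_{XW \cdot Z}/\sigma^2_{X \cdot Z}$, substitute into the recursion for $\sigma_{XY \cdot ZW}$, and recognize the partial variance $\sigma^2_{X \cdot ZW}$; introducing the coefficient $\delta$ is only a notational variant of the paper's computation.
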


\begin{proof}
Note that $Y \ci W | Z \cup X$ implies that
\[
0 = \sigma_{YW \cdot ZX} = \sigma_{YW \cdot Z} - \frac{\sigma_{YX \cdot Z} \sigma_{XW \cdot Z}}{\sigma^2_{X \cdot Z}}
\]
which implies that
\[
\sigma_{YW \cdot Z} = \frac{\sigma_{YX \cdot Z} \sigma_{XW \cdot Z}}{\sigma^2_{X \cdot Z}}.
\]
Therefore,
\begin{align*}
\sigma_{XY \cdot ZW} &= \sigma_{XY \cdot Z} - \frac{\sigma_{XW \cdot Z} \sigma_{WY \cdot Z}}{\sigma^2_{W \cdot Z}} = \sigma_{XY \cdot Z} - \frac{\sigma_{XW \cdot Z} \sigma_{YX \cdot Z} \sigma_{XW \cdot Z}}{\sigma^2_{W \cdot Z} \sigma^2_{X \cdot Z}}\\
&= \sigma_{XY \cdot Z} \Big( 1 - \frac{\sigma_{XW \cdot Z} \sigma_{XW \cdot Z}}{\sigma^2_{W \cdot Z} \sigma^2_{X \cdot Z}} \Big) = \frac{\sigma_{XY \cdot Z}}{\sigma^2_{X \cdot Z}} \Big( \sigma^2_{X \cdot Z} - \frac{\sigma_{XW \cdot Z} \sigma_{XW \cdot Z}}{\sigma^2_{W \cdot Z}} \Big)\\
&= \sigma_{XY \cdot Z} \frac{\sigma^2_{X \cdot ZW}}{\sigma^2_{X \cdot Z}}.
\end{align*}
\end{proof}

\begin{lemma}\label{lem:aux3}
Consider a path diagram. Let $X$, $Y$ and $W$ be nodes and $Z$ a set of nodes. If $X \ci W | Z$ or $Y \ci W | Z$, then $\sigma_{XY \cdot ZW} =  \sigma_{XY \cdot Z}$.
\end{lemma}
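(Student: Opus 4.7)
The plan is to invoke the standard recursive identity for partial covariance, exactly the one used in the proofs of Lemmas \ref{lem:aux1} and \ref{lem:aux2}, namely
\[
\sigma_{XY \cdot ZW} = \sigma_{XY \cdot Z} - \frac{\sigma_{XW \cdot Z}\, \sigma_{WY \cdot Z}}{\sigma^2_{W \cdot Z}},
\]
and then observe that either hypothesis kills the correction term. Specifically, for Gaussian random vectors, partial covariance equals conditional covariance, so $X \ci W \mid Z$ implies $\sigma_{XW \cdot Z} = 0$, and $Y \ci W \mid Z$ implies $\sigma_{WY \cdot Z} = 0$. In either case the numerator of the subtracted fraction vanishes and we are left with $\sigma_{XY \cdot ZW} = \sigma_{XY \cdot Z}$.

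No obstacle is expected here: the argument is a one-line substitution, in contrast to Lemmas \ref{lem:aux1} and \ref{lem:aux2} where additional conditional independence assumptions had to be combined to rewrite the surviving partial covariance as a partial variance ratio. The only mild subtlety is that one should note $\sigma^2_{W \cdot Z} \neq 0$ implicitly (otherwise conditioning on $W$ is degenerate), which is the same standing assumption used throughout the preceding lemmas.
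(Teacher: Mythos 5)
Your proposal is correct and matches the paper's proof essentially verbatim: both use the recursion $\sigma_{XY \cdot ZW} = \sigma_{XY \cdot Z} - \sigma_{XW \cdot Z}\sigma_{WY \cdot Z}/\sigma^2_{W \cdot Z}$ and note that either independence zeroes the numerator of the correction term. Nothing further is needed.
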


\begin{proof}
Note that $X \ci W | Z$ and $Y \ci W | Z$ imply $\sigma_{XW \cdot Z}=0$ and $\sigma_{YW \cdot Z}=0$, respectively. Thus, if $X \ci W | Z$ or $Y \ci W | Z$ then
\[
\sigma_{XY \cdot ZW} = \sigma_{XY \cdot Z} - \frac{\sigma_{XW \cdot Z} \sigma_{WY \cdot Z}}{\sigma^2_{W \cdot Z}} = \sigma_{XY \cdot Z}.
\]
\end{proof}

\begin{lemma}\label{lem:root1}
Consider a path diagram. Let $\Pi_{X:Y}$ denote all the $Z$-open paths from $X$ to $Y$. Suppose that no path in $\Pi_{X:Y}$ has colliders. Suppose that all the paths in $\Pi_{X:Y}$ have a subpath $\la R \ra$ or $R = X \ra$. Let $W$ be a node that is $Z$-connected to $R$ by a path that does not contain any node that is in some path in $\Pi_{X:Y}$ except $R$. If $Pa(W) \cup Sp(W)= \emptyset$, then
\[
\sigma_{XY \cdot ZW} =  \sigma_{XY \cdot Z} \frac{\sigma^2_{R \cdot ZW}}{\sigma^2_{R \cdot Z}}.
\]
\end{lemma}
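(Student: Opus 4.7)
The plan is to reduce the lemma directly to Lemma \ref{lem:aux1}, whose conclusion matches the desired identity verbatim. To apply it I need to verify three separations in the path diagram: $X \ci Y \mid Z \cup R$, $X \ci W \mid Z \cup R$ and $Y \ci W \mid Z \cup R$. Once these are in hand, Lemma \ref{lem:aux1} yields the claim in one line.

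First I would verify $X \ci Y \mid Z \cup R$. Every path in $\Pi_{X:Y}$ is collider-free and crosses $R$ as an internal non-collider through the subpath $\la R \ra$, or else has $R = X$ as its starting endpoint. Adding $R$ to the conditioning set therefore closes every path in $\Pi_{X:Y}$ (the endpoint case being trivial since then the whole path begins $X \ra \cdots$). For the originally $Z$-closed paths I would rule out the only two mechanisms by which adding $R$ can newly unblock a path: $R$ appearing as a collider on the path itself, or $R$ being the only previously missing descendant of some other collider on the path. In each subcase I would splice the offending $Z$-closed path with the appropriate directed $R$-segment to exhibit a $Z$-open path in $\Pi_{X:Y}$ that must contain a collider, contradicting the collider-free hypothesis on $\Pi_{X:Y}$.

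Next I would verify $X \ci W \mid Z \cup R$ (the argument for $Y \ci W \mid Z \cup R$ being symmetric). Suppose for contradiction that there is a $(Z \cup R)$-open path $\pi$ from $X$ to $W$. Since $Pa(W) \cup Sp(W) = \emptyset$, the node $W$ has no incoming edges, so $\pi$ must reach $W$ along an outgoing edge of $W$, i.e.\ its last edge has the form $V \la W$. The hypothesized $Z$-open path $\rho$ from $W$ to $R$ likewise leaves $W$ along an outgoing edge, so concatenating $\pi$ with $\rho$ places $W$ in the middle of the combined route as a non-collider, and $W$ therefore does not block it. A node-by-node inspection, using that $\rho$ is disjoint from $\Pi_{X:Y}$ except at $R$, shows that $\pi \cup \rho$ remains $(Z \cup R)$-open; Lemma \ref{lem:pathroute} then extracts a $(Z \cup R)$-open path $\tau$ from $X$ to $R$. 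Appending to $\tau$ the $R$-to-$Y$ tail of any chosen path in $\Pi_{X:Y}$ produces a $(Z \cup R)$-open route from $X$ to $Y$, contradicting the first step.

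The main obstacle is the case analysis inside the first step: ruling out that placing $R$ in the conditioning set accidentally unblocks some $Z$-closed path between $X$ and $Y$ requires identifying, in each of the two unblocking mechanisms, a replacement $Z$-open path in $\Pi_{X:Y}$ that must carry a collider. A secondary technical subtlety appears in the second step, where preserving $(Z \cup R)$-openness of the concatenation $\pi \cup \rho$ across the splice at $W$ relies crucially on both $Pa(W) \cup Sp(W) = \emptyset$ and on the disjointness of $\rho$ from $\Pi_{X:Y}$ away from $R$.
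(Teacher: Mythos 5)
Your overall plan---verify three separations and invoke Lemma \ref{lem:aux1}---matches the paper's strategy only for the case where every path in $\Pi_{X:Y}$ contains $\la R \ra$, and your treatment of the other case is a genuine gap. When the shared structure is $R = X \ra$, the hypotheses of Lemma \ref{lem:aux1} become $X \ci W \mid Z \cup X$ and $X \ci Y \mid Z \cup X$, i.e.\ separations that condition on an endpoint; these are degenerate, and the paper deliberately avoids them by treating this case with Lemma \ref{lem:aux2}, which requires only $Y \ci W \mid Z \cup X$. Moreover, that separation is not obtained ``by symmetry'' from your argument for $X \ci W \mid Z \cup R$: the assumptions are asymmetric in $X$ and $Y$ (all paths in $\Pi_{X:Y}$ leave $R$ towards $Y$ through $R \ra$, and in the endpoint case they all start at $X$), so the paper runs a separate argument following a route from $Y$ and reaching $X$ or $W$ first. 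Dismissing the endpoint case as ``trivial'' skips exactly this work.

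There is also a concrete failure in your second step. After extracting a $(Z \cup R)$-open path $\tau$ from $X$ to $R$ and appending the $R$-to-$Y$ tail of some $\pi \in \Pi_{X:Y}$, the node $R$ becomes an interior non-collider of the resulting route (the tail leaves $R$ through $R \ra$), and since $R$ lies in the conditioning set $Z \cup R$, that route is \emph{not} $(Z \cup R)$-open; hence no contradiction with $X \ci Y \mid Z \cup R$ is obtained. What the paper does instead is show that the spliced route is $Z$-open (this is where $Pa(W) \cup Sp(W) = \emptyset$ and $R \notin Z$ enter) and contradict the structural hypothesis that every $Z$-open path from $X$ to $Y$ has the subpath $\la R \ra$ (or starts $X \ra$), using the edge-subset clause of Lemma \ref{lem:pathroute} together with the check that the splice contains no edge $\la R$ belonging to a path of $\Pi_{X:Y}$, and distinguishing whether the route from $X$ meets $R$ or $W$ first. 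The same misdirection affects your first step: the $Z$-open path you exhibit after splicing need not contain any collider (it can be entirely directed, e.g.\ when the reopened path reaches $R$ along a directed subpath), so ``contradicting collider-freeness'' is not forced; the contradiction must again be with the $\la R \ra$ subpath assumption, and establishing $Z$-openness of the splice must cope with colliders that are activated only through descendants---the paper's route formalism, in which an open collider must itself lie in the conditioning set, is precisely what makes that step clean.
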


\begin{proof}
Consider first the case where $\la R \ra$ is a subpath of every path in $\Pi_{X:Y}$. Assume to the contrary that $X \nci W | Z \cup R$ and let $\rho_{X:W}$ be a $(Z \cup R)$-open {\bf route}. Follow $\rho_{X:W}$ until reaching $R$ or $W$, and let $\pi_{X:Y} \in \Pi_{X:Y}$.
\begin{itemize}
\item If $R$ is reached first, then consider the first occurrence of $R$ in $\rho_{X:W}$ and note that $\rho_{X:R} \cup \pi_{R:Y}$ is a $Z$-open route. Moreover, it does not contain any edge $\la R$ that is in some path in $\Pi_{X:Y}$ because, otherwise, $\rho_{X:W}$ contains the edge ($\pi_{R:Y}$ cannot by definition) and, thus, it is not $(Z \cup R)$-open. Then, the path corresponding to $\rho_{X:R} \cup \pi_{R:Y}$ contradicts the assumptions in the lemma.

\item If $W$ is reached first, then let $\varrho_{R:W}$ denote a $Z$-open path that does not contain any node that is in some path in $\Pi_{X:Y}$ except $R$. Such a path exists by the assumptions in the lemma. Thus, $\rho_{X:W} \cup \varrho_{W:R} \cup \pi_{R:Y}$ is a $Z$-open route, because neither $R$ nor $W$ is a collider in it. The latter follows from the assumption that $Pa(W) \cup Sp(W)=\emptyset$. Moreover, the route does not contain any edge $\la R$ that is in some path in $\Pi_{X:Y}$ because, otherwise, $\rho_{X:W}$ contains the edge ($\varrho_{W:R}$ and $\pi_{R:Y}$ cannot by definition) and thus it reaches $R$ first. Then, the path corresponding to $\rho_{X:W} \cup \varrho_{W:R} \cup \pi_{R:Y}$ contradicts the assumptions in the lemma.
\end{itemize}
Consequently, $X \ci W | Z \cup R$. We can analogously prove that $Y \ci W | Z \cup R$. Now, assume to the contrary that $X \nci Y | Z \cup R$ and let $\rho_{X:Y}$ be a $(Z \cup R)$-open {\bf route}. Note that $R$ must be in $\rho_{X:Y}$ because, otherwise, $\rho_{X:Y}$ is $Z$-open and, thus, its corresponding path contradicts the assumptions in the lemma. Then, consider the first occurrence of $R$ in $\rho_{X:Y}$ and note that $\rho_{X:R} \cup \pi_{R:Y}$ is a $Z$-open route. Moreover, the route does not contain any edge $\la R$ that is in some path in $\Pi_{X:Y}$ because, otherwise, $\rho_{X:Y}$ contains the edge ($\pi_{R:Y}$ cannot by definition) and thus it is not $(Z \cup R)$-open. Then, the path corresponding to $\rho_{X:R} \cup \pi_{R:Y}$ contradicts the assumptions in the lemma. Consequently, $X \ci Y | Z \cup R$. Therefore, the desired result follows from Lemma \ref{lem:aux1}.

Finally, consider the case where $R = X \ra$ is a subpath of every path in $\Pi_{X:Y}$. Assume to the contrary that $Y \nci W | Z \cup X$ and let $\rho_{Y:W}$ be a $(Z \cup X)$-open {\bf route}. Follow $\rho_{Y:W}$ until reaching $X$ or $W$.
\begin{itemize}
\item If $X$ is reached first, then note that $\rho_{Y:X}$ does not contain any edge $X \ra$ that is in some path in $\Pi_{X:Y}$ because, otherwise, $\rho_{Y:W}$ is not $(Z \cup X)$-open. Then, the path corresponding to $\rho_{X:Y}$ contradicts the assumptions in the lemma.

\item If $W$ is reached first, let $\varrho_{X:W}$ denote a $Z$-open path that does not contain any node that is in some path in $\Pi_{X:Y}$ except $X$. Such a path exists by the assumptions in the lemma. Then, $\varrho_{X:W} \cup \rho_{W:Y}$ is a $Z$-open route, because $W$ is not a collider in it due to the assumption that $Pa(W) \cup Sp(W)=\emptyset$. Moreover, the route does not contain any edge $X \ra$ that is in some path in $\Pi_{X:Y}$ because, otherwise, $\rho_{Y:W}$ contains the edge ($\varrho_{X:W}$ cannot by definition) and thus it reaches $X$ first. Then, the path corresponding to $\varrho_{X:W} \cup \rho_{W:Y}$ contradicts the assumptions in the lemma.
\end{itemize}
Consequently, $Y \ci W | Z \cup X$ and, thus, the desired result follows from Lemma \ref{lem:aux2}.
\end{proof}

\begin{lemma}\label{lem:root2}
Consider a path diagram. Let $\Pi_{X:Y}$ denote all the $Z$-open paths from $X$ to $Y$. Suppose that no path in $\Pi_{X:Y}$ has colliders. Suppose that all the paths in $\Pi_{X:Y}$ have a subpath $\la R \ra$ or $R = X \ra$. Let $W$ be a node that is $Z$-connected to $R$ by a path that does not contain any node that is in some path in $\Pi_{X:Y}$ except $R$. If $Ch(W)= \emptyset$ and $\Pi_{X:Y}$ are all the $(Z \cup W)$-open paths from $X$ to $Y$, then
\[
\sigma_{XY \cdot ZW} =  \sigma_{XY \cdot Z} \frac{\sigma^2_{R \cdot ZW}}{\sigma^2_{R \cdot Z}}.
\]
\end{lemma}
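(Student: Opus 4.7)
The plan is to mirror the proof of Lemma \ref{lem:root1} step for step, with a single substantive change occurring in the subcase where the extension route reaches $W$ before $R$. As in Lemma \ref{lem:root1}, I would split into two cases according to whether $R \ne X$ or $R = X$.

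In the case $R \ne X$, I would establish the three independencies $X \ci W | Z \cup R$, $Y \ci W | Z \cup R$, and $X \ci Y | Z \cup R$, then invoke Lemma \ref{lem:aux1}. The third independence is settled by verbatim repetition of its proof in Lemma \ref{lem:root1}, since that argument involves only $R$. For each of the two independencies involving $W$, I would assume for contradiction a $(Z \cup R)$-open route and follow it from its starting endpoint until the first occurrence of $R$ or $W$. The $R$-first subcase is unchanged from Lemma \ref{lem:root1}: concatenate with $\pi_{R:Y}$ (or $\pi_{X:R}$), note that the result is $Z$-open, and extract a contradiction from the mandatory $\la R \ra$ subpath.

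The critical deviation is the $W$-first subcase. I would still form the concatenated route $\rho_{X:W} \cup \varrho_{W:R} \cup \pi_{R:Y}$, but now $Ch(W) = \emptyset$ forces every edge incident to $W$ to carry an arrowhead at $W$, so $W$ is necessarily a collider at its new interior occurrence. Instead of being $Z$-open, the concatenation is $(Z \cup W)$-open: $W$ lies in $Z \cup W$ and thus satisfies the collider criterion, while the remaining interior nodes retain their collider status and, since none of them equals $W$, their $(Z \cup R)$-openness (for nodes coming from $\rho_{X:W}$) or $Z$-openness (for nodes coming from $\varrho$ and $\pi_{R:Y}$) transfers verbatim to $(Z \cup W)$-openness. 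The corresponding path is therefore $(Z \cup W)$-open, and by the new hypothesis that $\Pi_{X:Y}$ is exactly the family of $(Z \cup W)$-open paths from $X$ to $Y$, this path lies in $\Pi_{X:Y}$ and must carry the subpath $\la R \ra$. The contradiction is then extracted exactly as in Lemma \ref{lem:root1}: the $\la R$ edge of the new path cannot lie in $\varrho_{W:R}$ (its other endpoint would be in both $\varrho$ and $\Pi_{X:Y}$, forcing it to equal $R$) nor in $\pi_{R:Y}$ (whose only edge at $R$ is the first one, and using it would collapse the two neighbours of $R$ in the new path into the same node), so it must lie in $\rho_{X:W}$, forcing $R$ to appear in $\rho_{X:W}$ and violating the $W$-first choice.

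The case $R = X$ is handled symmetrically: establish $Y \ci W | Z \cup X$ and invoke Lemma \ref{lem:aux2}, with the same $Ch(W) = \emptyset$ argument replacing the $Pa(W) \cup Sp(W) = \emptyset$ argument in Lemma \ref{lem:root1}'s $W$-first subcase. The main obstacle is the bookkeeping verifying that switching from $(Z \cup R)$- or $Z$-openness to $(Z \cup W)$-openness at the interior nodes other than $W$ introduces no new violations; this reduces to checking that none of those nodes equals $W$, which follows from the node-disjointness of $\varrho$ from $\Pi_{X:Y}$ except at $R$, from the fact that $W$ is not in any path in $\Pi_{X:Y}$, and from the "first occurrence" choice made when following $\rho_{X:W}$.
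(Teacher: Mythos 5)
Your proposal is correct and follows essentially the same route as the paper's proof: the same case split on $\la R \ra$ versus $R = X \ra$, the same route-following argument with the $R$-first subcase unchanged from Lemma \ref{lem:root1}, and the same modification in the $W$-first subcase, where $Ch(W)=\emptyset$ makes $W$ a collider so that the concatenated route is $(Z \cup W)$-open and the extra hypothesis that $\Pi_{X:Y}$ exhausts the $(Z \cup W)$-open paths yields the contradiction, before concluding via Lemmas \ref{lem:aux1} and \ref{lem:aux2}. The only slight inaccuracy is in your closing bookkeeping remark: for the nodes coming from $\rho_{X:W}$ the relevant check is that none of its colliders equals $R$ (guaranteed by the first-occurrence/$W$-first choice, which you do invoke), not that none equals $W$; this is a cosmetic point at the same level of informality as the paper's own argument.
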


\begin{proof}
Consider first the case where $\la R \ra$ is a subpath of every path in $\Pi_{X:Y}$. Assume to the contrary that $X \nci W | Z \cup R$ and let $\rho_{X:W}$ be a $(Z \cup R)$-open {\bf route}. Follow $\rho_{X:W}$ until reaching $R$ or $W$, and let $\pi_{X:Y} \in \Pi_{X:Y}$.
\begin{itemize}
\item If $R$ is reached first, then consider the first occurrence of $R$ in $\rho_{X:W}$ and note that $\rho_{X:R} \cup \pi_{R:Y}$ is a $Z$-open route. Moreover, it does not contain any edge $\la R$ that is in some path in $\Pi_{X:Y}$ because, otherwise, $\rho_{X:W}$ contains the edge ($\pi_{R:Y}$ cannot by definition) and, thus, it is not $(Z \cup R)$-open. Then, the path corresponding to $\rho_{X:R} \cup \pi_{R:Y}$ contradicts the assumptions in the lemma.

\item If $W$ is reached first, then let $\varrho_{R:W}$ denote a $Z$-open path that does not contain any node that is in some path in $\Pi_{X:Y}$ except $R$. Such a path exists by the assumptions in the lemma. Thus, $\rho_{X:W} \cup \varrho_{W:R} \cup \pi_{R:Y}$ is a $(Z \cup W)$-open route from $X$ to $Y$, because $W$ is a collider in it whereas $R$ is not. The former follows from the assumption that $Ch(W)=\emptyset$. Moreover, the route does not contain any edge $\la R$ that is in some path in $\Pi_{X:Y}$ because, otherwise, $\rho_{X:W}$ contains the edge ($\varrho_{W:R}$ and $\pi_{R:Y}$ cannot by definition) and thus it reaches $R$ first. Then, the path corresponding to $\rho_{X:W} \cup \varrho_{W:R} \cup \pi_{R:Y}$ contradicts the assumptions in the lemma.
\end{itemize}
Consequently, $X \ci W | Z \cup R$. We can analogously prove that $Y \ci W | Z \cup R$. Now, assume to the contrary that $X \nci Y | Z \cup R$ and let $\rho_{X:Y}$ be a $(Z \cup R)$-open {\bf route}. Note that $R$ must be in $\rho_{X:Y}$ because, otherwise, $\rho_{X:Y}$ is $Z$-open and, thus, its corresponding path contradicts the assumptions in the lemma. Then, consider the first occurrence of $R$ in $\rho_{X:Y}$ and note that $\rho_{X:R} \cup \pi_{R:Y}$ is a $Z$-open route. Moreover, the route does not contain any edge $\la R$ that is in some path in $\Pi_{X:Y}$ because, otherwise, $\rho_{X:Y}$ contains the edge ($\pi_{R:Y}$ cannot by definition) and thus it is not $(Z \cup R)$-open. Then, the path corresponding to $\rho_{X:R} \cup \pi_{R:Y}$ contradicts the assumptions in the lemma. Consequently, $X \ci Y | Z \cup R$. Therefore, the desired result follows from Lemma \ref{lem:aux1}.

Finally, consider the case where $R = X \ra$ is a subpath of every path in $\Pi_{X:Y}$. Assume to the contrary that $Y \nci W | Z \cup X$ and let $\rho_{Y:W}$ be a $(Z \cup X)$-open {\bf route}. Follow $\rho_{Y:W}$ until reaching $X$ or $W$.
\begin{itemize}
\item If $X$ is reached first, then note that $\rho_{Y:X}$ does not contain any edge $X \ra$ that is in some path in $\Pi_{X:Y}$ because, otherwise, $\rho_{Y:W}$ is not $(Z \cup X)$-open. Then, the path corresponding to $\rho_{X:Y}$ contradicts the assumptions in the lemma.

\item If $W$ is reached first, let $\varrho_{X:W}$ denote a $Z$-open path that does not contain any node that is in some path in $\Pi_{X:Y}$ except $X$. Such a path exists by the assumptions in the lemma. Then, $\varrho_{X:W} \cup \rho_{W:Y}$ is a $(Z \cup W)$-open route, because $W$ is a collider in it due to the assumption that $Ch(W)=\emptyset$. Moreover, the route does not contain any edge $X \ra$ that is in some path in $\Pi_{X:Y}$ because, otherwise, $\rho_{Y:W}$ contains the edge ($\varrho_{X:W}$ cannot by definition) and thus it reaches $X$ first. Then, the path corresponding to $\varrho_{X:W} \cup \rho_{W:Y}$ contradicts the assumptions in the lemma.
\end{itemize}
Consequently, $Y \ci W | Z \cup X$ and, thus, the desired result follows from Lemma \ref{lem:aux2}.
\end{proof}

\begin{lemma}\label{lem:nonroot1}
Consider a path diagram. Let $\Pi_{X:Y}$ denote all the $Z$-open paths from $X$ to $Y$. Suppose that no path in $\Pi_{X:Y}$ has colliders. Suppose that all the paths in $\Pi_{X:Y}$ have a subpath $\oa R \ra$ or $\oa Y = R$. Let $W$ be a node that is $Z$-connected to $R$ through $Pa(R) \cup Sp(R)$ by a path that does not contain any node that is in some path in $\Pi_{X:Y}$ except $R$. If $Pa(W) \cup Sp(W)= \emptyset$ and $W$ is not $Z$-connected to $R$ through $Ch(R)$, then
\[
\sigma_{XY \cdot ZW} =  \sigma_{XY \cdot Z}.
\]
\end{lemma}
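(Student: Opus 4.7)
The plan is to apply Lemma \ref{lem:aux3}, so it suffices to establish $X \ci W | Z$. The alternative hypothesis $Y \ci W | Z$ of Lemma \ref{lem:aux3} fails here in general, because $\varrho_{W:R} \cup \pi_{R:Y}$ is a $Z$-open route from $W$ to $Y$ whose junction at $R$ is a non-collider $\oa R \ra$, and in the case $R = Y$ the path $\varrho$ itself witnesses $W \nci Y | Z$.

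To prove $X \ci W | Z$ I would argue by contradiction, taking a $Z$-open \emph{path} $\rho_{X:W}$ via Lemma \ref{lem:pathroute}. The first step is to rule out $R \in \rho_{X:W}$. Since $R$ is a non-collider in every $\pi \in \Pi_{X:Y}$ and $\pi$ is $Z$-open, $R \notin Z$, so if $R$ lay in the interior of $\rho_{X:W}$, non-collider openness would force one of the two edges at $R$ to be a tail at $R$. A tail on the $X$-side of $R$ makes $\rho_{X:R} \cup \pi_{R:Y}$ a $Z$-open route whose corresponding path belongs to $\Pi_{X:Y}$ but carries the subpath $\la R \ra$ at $R$, contradicting the assumed $\oa R \ra$ (in the case $R = Y$, $\rho_{X:Y}$ itself lies in $\Pi_{X:Y}$ but ends at $Y$ with a tail, contradicting $\oa Y$). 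A tail on the $W$-side of $R$ makes $\rho_{R:W}$ reversed a $Z$-open path from $W$ to $R$ ending through $Ch(R)$, directly contradicting the hypothesis of the lemma.

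Next, with $R \notin \rho_{X:W}$, form the combined route $\rho_{X:W} \cup \varrho_{W:R} \cup \pi_{R:Y}$. It is $Z$-open: at $W$ both incident edges have tails since $Pa(W) \cup Sp(W) = \emptyset$; at $R$ the $\oa R$ edge from $\varrho$ meets the $R \ra$ edge from $\pi_{R:Y}$, another non-collider; and $W, R \notin Z$. Let $\pi^*$ denote the corresponding $Z$-open path produced by Lemma \ref{lem:pathroute}; then $\pi^* \in \Pi_{X:Y}$, so it must contain $R$ with the shared $\oa R \ra$ (or $\oa Y = R$) subpath. Since the only edges incident to $R$ in the combined route are $\varrho$'s last edge, whose other endpoint $P$ lies in $(Pa(R) \cup Sp(R)) \cap \varrho$, and $\pi_{R:Y}$'s first edge (tail at $R$), the $\oa R$ edge of $\pi^*$ must be $\varrho$'s last, placing $P$ into $\pi^*$. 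The hypothesis on $\varrho$ then forces $P$ to lie in no path of $\Pi_{X:Y}$ (or $P = W$ if $\varrho$ consists of a single edge joining $W$ to $R$, in which case $W \in \pi^*$ and the same conclusion applies to $W$), contradicting $\pi^* \in \Pi_{X:Y}$.

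The main obstacle is controlling what survives the cycle-removal procedure of Lemma \ref{lem:pathroute}: a priori some cut could strip $R$ or $P$ from $\pi^*$. The key observation to finish is that bracketing the junction at $R$ from both sides requires a repeated node $A$ lying simultaneously in $\rho_{X:W}$ and in $\pi_{R:Y}$ (any overlap of $\rho$ with $\varrho$ brackets $W$ but not $R$, while $\varrho \cap \pi_{R:Y} = \{R\}$ rules out the remaining option); but such a cut also removes $R$ itself, so $\pi^*$ lacks $R$ altogether and cannot lie in $\Pi_{X:Y}$, again a contradiction. Once this bookkeeping is verified, Lemma \ref{lem:aux3} yields $\sigma_{XY \cdot ZW} = \sigma_{XY \cdot Z}$.
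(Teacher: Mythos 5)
Your overall strategy is the paper's: establish $X \ci W | Z$ and then invoke Lemma \ref{lem:aux3}, and your concatenation argument for the case $R \notin \rho_{X:W}$ is essentially the paper's ``$W$ reached first'' step (the paper settles the extraction bookkeeping you worry about more cleanly, by noting that the extracted path, being in $\Pi_{X:Y}$, would have to contain an edge $\oa R$ that lies on a path in $\Pi_{X:Y}$, whereas the concatenated route contains no such edge). The genuine gap is in your first step, where you take a $Z$-open \emph{path} $\rho_{X:W}$ and assert that if $R$ lies on it then one of the two edges at $R$ carries a tail at $R$. That only follows if $R$ is a non-collider on $\rho_{X:W}$: on a path, a collider is open not only when it is in $Z$ but also when it has a descendant in $Z$, and $R \notin Z$ does not exclude a configuration $\cdots \oa R \ao \cdots$ on $\rho_{X:W}$ with some descendant of $R$ in $Z$. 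In that case neither of your subcases applies, and the $W$-side of $\rho_{X:W}$ reaches $R$ through $Pa(R) \cup Sp(R)$, which the hypotheses explicitly permit, so the dichotomy cannot be repaired within the path framework.

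This omitted case is not vacuous, and it is exactly where the ``not $Z$-connected to $R$ through $Ch(R)$'' hypothesis must do its work. Consider the diagram $H \ra X$, $H \ra R$, $R \ra Y$, $R \ra D$, $P \ra R$, $W \ra P$ with $Z = \{D\}$: the only $Z$-open path from $X$ to $Y$ is $X \la H \ra R \ra Y$, the path $W \ra P \ra R$ connects $W$ to $R$ through $Pa(R)$ avoiding that path, $Pa(W) \cup Sp(W) = \emptyset$, and no \emph{path} from $W$ reaches $R$ through $Ch(R)$; yet $X \la H \ra R \la P \la W$ is $Z$-open because the collider $R$ is opened by its descendant $D \in Z$, so $X \nci W | Z$, and a direct computation (unit coefficients and error variances) gives $\sigma_{XY \cdot ZW} = 1/4 \neq 1/5 = \sigma_{XY \cdot Z}$. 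Hence the $Ch(R)$ hypothesis has to be exploited at the level of routes, which is what the paper's proof does: on a $Z$-open \emph{route}, an open collider must itself be in $Z$, so after entering $R$ via $\oa R$ the route must leave via $R \ra$, yielding a $Z$-open connection from $W$ to $R$ through $Ch(R)$ that contradicts the assumption (in the example, $W \ra P \ra R \ra D$ and back to $R$). Your argument, as written, never confronts the collider-at-$R$ case and therefore does not go through without this route-level step.
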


\begin{proof}
Consider first the case where $\oa R \ra$ is a subpath of every path in $\Pi_{X:Y}$. Assume to the contrary that $X \nci W | Z$ and let $\rho_{X:W}$ be a $Z$-open {\bf route}. Follow $\rho_{X:W}$ until reaching $R$ or $W$, and let $\pi_{X:Y} \in \Pi_{X:Y}$.
\begin{itemize}
\item If $R$ is reached first, then consider the first occurrence of $R$ in $\rho_{X:W}$ and note that $\rho_{X:R}$ ends with an edge $\oa R$ because, otherwise, $\rho_{X:R} \cup \pi_{R:Y}$ is a $Z$-open route that has a subroute $\la R \ra$ and, thus, its corresponding path contradicts the assumptions in the lemma. Then, $\rho_{R:W}$ must start with an edge $R \ra$ for $\rho_{X:W}$ to be $Z$-open. However, this contradicts the assumption that $W$ is not $Z$-connected to $R$ through $Ch(R)$.

\item If $W$ is reached first, then let $\varrho_{R:W}$ denote a $Z$-open path that does not contain any node that is in some path in $\Pi_{X:Y}$ except $R$. Such a path exists by the assumptions in the lemma. Thus, $\rho_{X:W} \cup \varrho_{W:R} \cup \pi_{R:Y}$ is a $Z$-open route, because neither $R$ nor $W$ is a collider in it. The latter follows from the assumption that $Pa(W) \cup Sp(W)=\emptyset$. Moreover, the route does not contain any edge $\oa R$ that is in some path in $\Pi_{X:Y}$ because, otherwise, $\rho_{X:W}$ contains the edge ($\varrho_{W:R}$ and $\pi_{R:Y}$ cannot by definition) and thus it reaches $R$ first. Then, the path corresponding to $\rho_{X:W} \cup \varrho_{W:R} \cup \pi_{R:Y}$ contradicts the assumptions in the lemma.
\end{itemize}
Consequently, $X \ci W | Z$. When $\oa Y = R$ is a subpath of every path in $\Pi_{X:Y}$, we can prove that $X \ci W | Z$ much in the same way. Consequently, $X \ci W | Z$ in either case and, thus, the desired result follows from Lemma \ref{lem:aux3}.
\end{proof}

\begin{lemma}\label{lem:nonroot2}
Consider a path diagram. Let $\Pi_{X:Y}$ denote all the $Z$-open paths from $X$ to $Y$. Suppose that no path in $\Pi_{X:Y}$ has colliders. Suppose that all the paths in $\Pi_{X:Y}$ have a subpath $\oa R \ra$ or $\oa Y = R$. Let $W$ be a node that is $Z$-connected to $R$ through $Pa(R) \cup Sp(R)$ by a path that does not contain any node that is in some path in $\Pi_{X:Y}$ except $R$. If $Ch(W)= \emptyset$, and $W$ is not $Z$-connected to $R$ through $Ch(R)$, and $\Pi_{X:Y}$ are all the $(Z \cup W)$-open paths from $X$ to $Y$, then
\[
\sigma_{XY \cdot ZW} =  \sigma_{XY \cdot Z}.
\]
\end{lemma}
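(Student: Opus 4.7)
The plan is to mirror the structure of Lemma \ref{lem:nonroot1}: show that $X \ci W | Z$ (when the shared subpath at $R$ is $\oa R \ra$) or that $Y \ci W | Z$ (when $\oa Y = R$), and then invoke Lemma \ref{lem:aux3} to obtain the desired equality. As before, I would argue by contradiction: assume a $Z$-open route $\rho_{X:W}$ exists and traverse it until it first reaches $R$ or $W$, then derive a contradiction in each subcase from the hypotheses of the lemma.

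If $R$ is reached first, the argument is essentially verbatim from Lemma \ref{lem:nonroot1}. Concatenating $\rho_{X:R}$ with any $\pi_{R:Y} \in \Pi_{X:Y}$ forces $\rho_{X:R}$ to end with $\oa R$, since otherwise the corresponding $Z$-open path from $X$ to $Y$ would belong to $\Pi_{X:Y}$ but contain a subpath $\la R \ra$, violating the $\oa R \ra$ structure. Then $\rho_{R:W}$ must start with $R \ra$ for $R$ (which lies outside $Z$ as a non-collider on $Z$-open paths) to remain a non-collider in $\rho_{X:W}$. This exhibits a $Z$-open route from $R$ to $W$ through $Ch(R)$, contradicting the hypothesis.

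The subcase where $W$ is reached first is where the proof must diverge from Lemma \ref{lem:nonroot1}. Let $\varrho_{R:W}$ denote the given $Z$-open path through $Pa(R) \cup Sp(R)$ that avoids every path in $\Pi_{X:Y}$ except at $R$. I form the concatenation $\rho_{X:W} \cup \varrho_{W:R} \cup \pi_{R:Y}$. At $R$ the pattern is $\oa R \ra$, a non-collider outside $Z \cup W$. At $W$, however, the assumption $Ch(W) = \emptyset$ forces the last edge of $\rho_{X:W}$ and the first edge of $\varrho_{W:R}$ to both carry an arrowhead at $W$, so $W$ is now a collider. Since $W \in Z \cup W$, the concatenated route is $(Z \cup W)$-open rather than $Z$-open. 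This is precisely where the new hypothesis is used: $\Pi_{X:Y}$ being exactly the set of $(Z \cup W)$-open paths from $X$ to $Y$ forces the corresponding path to belong to $\Pi_{X:Y}$, and then the same contradiction as in Lemma \ref{lem:nonroot1} applies, namely the edge $\oa R$ on this path comes from $\varrho_{W:R}$ whose interior lies outside every path in $\Pi_{X:Y}$, while membership in $\Pi_{X:Y}$ would place that edge in some path of $\Pi_{X:Y}$.

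The main technical obstacle I anticipate is verifying that adding $W$ to the conditioning set does not accidentally change the open/closed status of any other node along the route. Since $\rho_{X:W}$, $\varrho_{W:R}$, and $\pi_{R:Y}$ are each $Z$-open, the only risk is further occurrences of $W$ inside these subroutes, which can be avoided by picking $\rho_{X:W}$ so that $W$ appears only as its endpoint and noting that $\varrho_{W:R}$ avoids $\Pi_{X:Y}$ except at $R$. The case $\oa Y = R$ is handled symmetrically, establishing $Y \ci W | Z$ instead, with $X$ and $Y$ interchanged in the route construction.
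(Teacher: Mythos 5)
Your treatment of the main case ($\oa R \ra$ a subpath of every path in $\Pi_{X:Y}$) is correct and essentially identical to the paper's: truncate a putative $Z$-open route at the first hit of $R$ or $W$; in the $R$-first subcase force the edge $\oa R$ and then $R \ra$, contradicting that $W$ is not $Z$-connected to $R$ through $Ch(R)$; in the $W$-first subcase build $\rho_{X:W} \cup \varrho_{W:R} \cup \pi_{R:Y}$, observe that $Ch(W)=\emptyset$ makes $W$ a collider so the route is $(Z \cup W)$-open, and use the new hypothesis that $\Pi_{X:Y}$ exhausts the $(Z \cup W)$-open paths to get the contradiction. You correctly identified that this hypothesis is exactly what replaces the ``$W$ is not a collider'' step of Lemma \ref{lem:nonroot1}, and your check that $W$ occurs only at the junction is fine.

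The gap is in the case $\oa Y = R$. You propose to establish $Y \ci W | Z$ ``symmetrically, with $X$ and $Y$ interchanged,'' but that independence is false under the lemma's hypotheses: $W$ is assumed to be $Z$-connected to $R$ through $Pa(R) \cup Sp(R)$, and here $R = Y$, so $Y \nci W | Z$ by assumption and no argument can prove the opposite. The situation is not symmetric in $X$ and $Y$ --- the shared subpath sits at the $Y$ end, and it is precisely $X$, not $Y$, that gets screened off from $W$. What you must prove in this case is again $X \ci W | Z$ (which is all Lemma \ref{lem:aux3} needs, since it requires only one of the two independencies): assume a $Z$-open route from $X$ to $W$, follow it until it first reaches $Y$ or $W$, and rerun the same two subcases as before with $R$ replaced by $Y$ (in the $W$-first subcase the concatenation is $\rho_{X:W} \cup \varrho_{W:Y}$, which is $(Z \cup W)$-open because $Ch(W)=\emptyset$ makes $W$ a collider, and its corresponding path again violates the hypothesis that $\Pi_{X:Y}$ are all the $(Z \cup W)$-open paths, since it reaches $Y$ through an edge not in any path of $\Pi_{X:Y}$). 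This is how the paper closes that case; as written, your final paragraph asserts an independence that contradicts the lemma's own assumptions.
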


\begin{proof}
Consider first the case where $\oa R \ra$ is a subpath of every path in $\Pi_{X:Y}$. Assume to the contrary that $X \nci W | Z$ and let $\rho_{X:W}$ be a $Z$-open {\bf route}. Follow $\rho_{X:W}$ until reaching $R$ or $W$, and let $\pi_{X:Y} \in \Pi_{X:Y}$.
\begin{itemize}
\item If $R$ is reached first, then consider the first occurrence of $R$ in $\rho_{X:W}$ and note that $\rho_{X:R}$ ends with an edge $\oa R$ because, otherwise, $\rho_{X:R} \cup \pi_{R:Y}$ is a $Z$-open route that has a subroute $\la R \ra$ and, thus, its corresponding path contradicts the assumptions in the lemma. Then, $\rho_{R:W}$ must start with an edge $R \ra$ for $\rho_{X:W}$ to be $Z$-open. However, this contradicts the assumption that $W$ is not $Z$-connected to $R$ through $Ch(R)$.

\item If $W$ is reached first, then let $\varrho_{R:W}$ denote a $Z$-open path that does not contain any node that is in some path in $\Pi_{X:Y}$ except $R$. Such a path exists by the assumptions in the lemma. Thus, $\rho_{X:W} \cup \varrho_{W:R} \cup \pi_{R:Y}$ is a $(Z \cup W)$-open route, because $W$ is a collider in it whereas $R$ is not. The former follows from the assumption that $Ch(W)=\emptyset$. Moreover, the route does not contain any edge $\oa R$ that is in some path in $\Pi_{X:Y}$ because, otherwise, $\rho_{X:W}$ contains the edge ($\varrho_{W:R}$ and $\pi_{R:Y}$ cannot by definition) and thus it reaches $R$ first. Then, the path corresponding to $\rho_{X:W} \cup \varrho_{W:R} \cup \pi_{R:Y}$ contradicts the assumptions in the lemma.
\end{itemize}
Consequently, $X \ci W | Z$. When $\oa Y = R$ is a subpath of every path in $\Pi_{X:Y}$, we can prove that $X \ci W | Z$ much in the same way. Consequently, $X \ci W | Z$ in either case and, thus, the desired result follows from Lemma \ref{lem:aux3}.
\end{proof}

\begin{lemma}\label{lem:nonroot3}
Consider a path diagram. Let $\Pi_{X:Y}$ denote all the $Z$-open paths from $X$ to $Y$. Suppose that no path in $\Pi_{X:Y}$ has colliders. Suppose that all the paths in $\Pi_{X:Y}$ have a subpath $\oa R \ra$ or $\oa Y = R$. Let $W$ be a node that is $Z$-connected to $R$ through $Ch(R)$ by a path that does not contain any node that is in some path in $\Pi_{X:Y}$ except $R$. If $Pa(W) \cup Sp(W)= \emptyset$ and $W$ is not $Z$-connected to $R$ through $Pa(R) \cup Sp(R)$, then
\[
\sigma_{XY \cdot ZW} =  \sigma_{XY \cdot Z} \frac{\sigma^2_{R \cdot ZW}}{\sigma^2_{R \cdot Z}}.
\]
\end{lemma}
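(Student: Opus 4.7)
The conclusion of Lemma \ref{lem:nonroot3} matches the formula of Lemma \ref{lem:aux1} (in the subcase $\oa R \ra$) and, after swapping the roles of $X$ and $Y$, that of Lemma \ref{lem:aux2} (in the subcase $\oa Y = R$). My plan is to establish the relevant conditional independencies and then invoke those auxiliary lemmas: in the $\oa R \ra$ subcase, prove $X \ci W | Z \cup R$, $Y \ci W | Z \cup R$, and $X \ci Y | Z \cup R$, then apply Lemma \ref{lem:aux1}; in the $\oa Y = R$ subcase, prove $X \ci W | Z \cup R$ and apply Lemma \ref{lem:aux2} with $X$ and $Y$ interchanged. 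The argument parallels Lemmas \ref{lem:root1} and \ref{lem:nonroot1}, but with the roles of $Pa(R) \cup Sp(R)$ and $Ch(R)$ interchanged; because $W$ now connects to $R$ through $Ch(R)$, the routes of interest are $(Z \cup R)$-open rather than $Z$-open.

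For $X \ci W | Z \cup R$ in the subcase $\oa R \ra$, I assume for contradiction a $(Z \cup R)$-open route $\rho_{X:W}$ and follow it until reaching $R$ or $W$ first; fix any $\pi_{X:Y} \in \Pi_{X:Y}$. If $R$ is reached first, then $R$ is internal in $\rho_{X:W}$ and lies in $Z \cup R$, so $R$ must be a collider there; in particular the first edge of $\rho_{R:W}$ exits $R$ through $Pa(R) \cup Sp(R)$. Extracting a path from the $(Z \cup R)$-open subroute $\rho_{R:W}$ via Lemma \ref{lem:pathroute} removes any internal reoccurrences of $R$; the resulting path is $Z$-open (its internal nodes are not $R$, so $(Z \cup R)$-openness collapses to $Z$-openness), and because every internal reoccurrence of $R$ in $\rho_{R:W}$ must also be a collider, both incident edges there carry arrowheads at $R$, so the shortcut preserves an exiting arrowhead at $R$. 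Hence there is a $Z$-open path from $R$ to $W$ through $Pa(R) \cup Sp(R)$, contradicting the hypothesis.

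If instead $W$ is reached first, let $\varrho_{R:W}$ denote a $Z$-open path from $R$ to $W$ through $Ch(R)$ furnished by hypothesis, avoiding every $\Pi_{X:Y}$ node except $R$, and consider the concatenation $\rho_{X:W} \cup \varrho_{W:R} \cup \pi_{R:Y}$. At $W$, $Pa(W) \cup Sp(W) = \emptyset$ makes $W$ a non-collider, and at $R$ both incident edges (the first of $\varrho_{R:W}$ into $Ch(R)$, and the first of $\pi_{R:Y}$, which is $R \ra$) have tails at $R$, so $R$ is a non-collider; since $R \notin Z$ (a non-collider in the collider-free, $Z$-open $\pi_{X:Y}$), the route is $Z$-open. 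Its corresponding path therefore lies in $\Pi_{X:Y}$, but it cannot exhibit an arrowhead at $R$ (its edges at $R$ come from the route, all of which have tails) and hence fails the mandated subpath $\oa R \ra$, contradiction. The proof of $Y \ci W | Z \cup R$ is symmetric, and for $X \ci Y | Z \cup R$ any $(Z \cup R)$-open route $\rho_{X:Y}$ must visit $R$ (else its $Z$-open corresponding path lacks $\oa R \ra$), and splicing with $\pi_{R:Y}$ at the first occurrence of $R$ reproduces the "$R$ reached first" contradiction. The subcase $\oa Y = R$ runs analogously with $R$ playing the role of endpoint $Y$.

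The main obstacle is the "$R$ reached first" step, where I must promote the $(Z \cup R)$-open subroute $\rho_{R:W}$ to a $Z$-open witness for the hypothesis. The key is the collider obligation on every internal reoccurrence of $R$ in $\rho_{R:W}$, which forces both neighboring edges to be through $Pa(R) \cup Sp(R)$; this guarantees that the path produced by Lemma \ref{lem:pathroute} still exits $R$ through $Pa(R) \cup Sp(R)$, so the contradiction with the non-connectedness hypothesis goes through.
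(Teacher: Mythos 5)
Your overall plan is the paper's: establish $X \ci W | Z \cup R$, $Y \ci W | Z \cup R$ and $X \ci Y | Z \cup R$ and invoke Lemma \ref{lem:aux1} in the $\oa R \ra$ case, and establish $X \ci W | Z \cup Y$ and invoke Lemma \ref{lem:aux2} (with $X$ and $Y$ swapped) in the $\oa Y = R$ case. Your treatment of $X \ci W | Z \cup R$ is essentially the paper's, except for one imprecision in the ``$R$ reached first'' branch: for \emph{paths}, $(Z \cup R)$-openness does not collapse to $Z$-openness merely because the internal nodes differ from $R$, since a collider of the extracted path may be licensed only by having $R$ as a descendant. The clean fix (and the paper's move) is to cut at the \emph{last} occurrence of $R$: the remaining subroute contains $R$ only as an endpoint, so it is $Z$-open already as a route (route-openness has no descendant clause), and its single edge at $R$ carries an arrowhead.

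The genuine gaps are in the other two independencies. First, $Y \ci W | Z \cup R$ is \emph{not} symmetric to $X \ci W | Z \cup R$, because the mandated subpath $\oa R \ra$ is oriented towards $Y$: in the ``$W$ reached first'' branch the mirrored concatenation $\rho_{Y:W} \cup \varrho_{W:R} \cup \pi_{R:X}$ does contain an edge with an arrowhead at $R$ (the $\oa R$ edge of $\pi_{X:Y}$), so your contradiction mechanism (``no arrowhead at $R$ in the route'') fails; the corresponding path can simply be a legitimate member of $\Pi_{X:Y}$. The paper instead concatenates $\pi_{X:R} \cup \varrho_{R:W} \cup \rho_{W:Y}$, with $\varrho_{R:W}$ leaving $R$ through $Ch(R)$, and gets the contradiction from the fact that the route contains no edge $R \ra$ that belongs to a path in $\Pi_{X:Y}$, whereas the corresponding path (being in $\Pi_{X:Y}$) must contain one. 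Second, your argument for $X \ci Y | Z \cup R$ does not work: splicing $\rho_{X:R} \cup \pi_{R:Y}$ at the first occurrence of $R$ yields a $Z$-open route whose corresponding path may well be an ordinary member of $\Pi_{X:Y}$ (an arrowhead into $R$ is available from $\rho_{X:R}$, since $R$ is a collider there), and the ``$R$ reached first'' contradiction is unavailable because that contradiction was against the hypothesis that $W$ is not $Z$-connected to $R$ through $Pa(R) \cup Sp(R)$, which plays no role in a route ending at $Y$. The paper proves this independence by a different idea: since $Pa(W) \cup Sp(W) = \emptyset$, the $Z$-open path from $R$ to $W$ through $Ch(R)$ cannot be directed all the way to $W$, so it meets a collider and hence some descendant of $R$ lies in $Z$; consequently every $(Z \cup R)$-open path from $X$ to $Y$ is already $Z$-open, i.e.\ lies in $\Pi_{X:Y}$, which is impossible because $R$ cannot occur as a non-collider on a $(Z \cup R)$-open path. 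You need this argument (or a substitute) for the lemma to go through, and a corresponding repair of the $Y \ci W | Z \cup R$ step.
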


\begin{proof}
Consider first the case where $\oa R \ra$ is a subpath of every path in $\Pi_{X:Y}$. Assume to the contrary that $X \nci W | Z \cup R$ and let $\rho_{X:W}$ be a $(Z \cup R)$-open {\bf route}. Follow $\rho_{X:W}$ until reaching $R$ or $W$, and let $\pi_{X:Y} \in \Pi_{X:Y}$.
\begin{itemize}
\item If $R$ is reached first, then note $R$ must be a collider in $\rho_{X:W}$ for this to be $(Z \cup R)$-open. However, the last occurrence of $R$ in $\rho_{X:W}$ contradicts the assumption that $W$ is not $Z$-connected to $R$ through $Pa(R) \cup Sp(R)$.

\item If $W$ is reached first, then let $\varrho_{R:W}$ denote a $Z$-open path that does not contain any node that is in some path in $\Pi_{X:Y}$ except $R$. Such a path exists by the assumptions in the lemma. Thus, $\rho_{X:W} \cup \varrho_{W:R} \cup \pi_{R:Y}$ is a $Z$-open route, because neither $R$ nor $W$ is a collider in it. The latter follows from the assumption that $Pa(W) \cup Sp(W)=\emptyset$. Moreover, the route does not contain any edge $\oa R$ that is in some path in $\Pi_{X:Y}$ because, otherwise, $\rho_{X:W}$ contains the edge ($\varrho_{W:R}$ and $\pi_{R:Y}$ cannot by definition) and thus it reaches $R$ first. Then, the path corresponding to $\rho_{X:W} \cup \varrho_{W:R} \cup \pi_{R:Y}$ contradicts the assumptions in the lemma.
\end{itemize}
Consequently, $X \ci W | Z \cup R$. Now, assume to the contrary that $Y \nci W | Z \cup R$ and let $\rho_{Y:W}$ be a $(Z \cup R)$-open {\bf route}. Follow $\rho_{Y:W}$ until reaching $R$ or $W$, and let $\pi_{X:Y} \in \Pi_{X:Y}$.
\begin{itemize}
\item If $R$ is reached first, then note $R$ must be a collider in $\rho_{Y:W}$ for this be $(Z \cup R)$-open. However, the last occurrence of $R$ in $\rho_{Y:W}$ contradicts the assumption that $W$ is not $Z$-connected to $R$ through $Pa(R) \cup Sp(R)$.

\item If $W$ is reached first, then let $\varrho_{R:W}$ denote a $Z$-open path that leaves $R$ through $Ch(R)$ and that does not contain any node that is in some path in $\Pi_{X:Y}$ except $R$. Such a path exists by the assumptions in the lemma. Thus, $\pi_{X:R} \cup \varrho_{R:W} \cup \rho_{W:Y}$ is a $Z$-open route, because neither $R$ nor $W$ is a collider in it. The latter follows from the assumption that $Pa(W) \cup Sp(W)=\emptyset$. Moreover, the route does not contain any edge $R \ra$ that is in some path in $\Pi_{X:Y}$ because, otherwise, $\rho_{W:Y}$ contains the edge ($\pi_{X:R}$ and $\varrho_{R:W}$ cannot by definition) and thus it reaches $R$ first. Then, the path corresponding to $\pi_{X:R} \cup \varrho_{R:W} \cup \rho_{W:Y}$ contradicts the assumptions in the lemma.
\end{itemize}
Consequently, $Y \ci W | Z \cup R$. Now, assume to the contrary that $X \nci Y | Z \cup R$ and let $\rho_{X:Y}$ be a $(Z \cup R)$-open {\bf path}. Note that $R$ must be a collider or a descendant of a collider in $\rho_{X:Y}$ because, otherwise, $R$ is not in $\rho_{X:Y}$ and, thus, $\rho_{X:Y}$ is $Z$-open, which contradicts the assumptions in the lemma. Note also that the assumption that $W$ is $Z$-connected to $R$ through $Ch(R)$ implies that some descendant of $R$ is in $Z \cup W$. Actually, some descendant of $R$ must be in $Z$ due to the assumption that $Pa(W) \cup Sp(W)=\emptyset$. Then, $\rho_{X:Y}$ is $Z$-open, which contradicts the assumptions in the lemma. Consequently, $X \ci Y | Z \cup R$. Therefore, the desired result follows from Lemma \ref{lem:aux1}.

Finally, consider the case where $\oa Y = R$ is a subpath of every path in $\Pi_{X:Y}$. Assume to the contrary that $X \nci W | Z \cup Y$ and let $\rho_{X:W}$ be a $(Z \cup Y)$-open {\bf route}. Follow $\rho_{X:W}$ until reaching $Y$ or $W$.
\begin{itemize}
\item If $Y$ is reached first, then note $Y$ must be a collider in $\rho_{X:W}$ for this to be $(Z \cup Y)$-open. However, the last occurrence of $Y$ in $\rho_{X:W}$ contradicts the assumption that $W$ is not $Z$-connected to $R$ through $Pa(Y) \cup Sp(Y)$.

\item If $W$ is reached first, let $\varrho_{Y:W}$ denote a $Z$-open path that does not contain any node that is in some path in $\Pi_{X:Y}$ except $Y$. Such a path exists by the assumptions in the lemma. Then, $\varrho_{X:W} \cup \rho_{W:Y}$ is a $Z$-open route, because $W$ is not a collider in it due to the assumption that $Pa(W) \cup Sp(W)=\emptyset$. Moreover, the route does not contain any edge $\oa Y$ that is in some path in $\Pi_{X:Y}$ because, otherwise, $\rho_{X:W}$ contains the edge ($\varrho_{Y:W}$ cannot by definition) and thus it reaches $Y$ first. Then, the path corresponding to $\varrho_{X:W} \cup \rho_{W:Y}$ contradicts the assumptions in the lemma.
\end{itemize}
Consequently, $X \ci W | Z \cup Y$ and, thus, the desired result follows from Lemma \ref{lem:aux2}.
\end{proof}

\begin{lemma}\label{lem:nonroot4}
Consider a path diagram. Let $\Pi_{X:Y}$ denote all the $Z$-open paths from $X$ to $Y$. Suppose that no path in $\Pi_{X:Y}$ has colliders. Suppose that all the paths in $\Pi_{X:Y}$ have a subpath $\oa R \ra$ or $\oa Y = R$. Let $W$ be a node that is $Z$-connected to $R$ through $Ch(R)$ by a path that does not contain any node that is in some path in $\Pi_{X:Y}$ except $R$. If $Ch(W)= \emptyset$, and $W$ is not $Z$-connected to $R$ by any path that reaches $R$ through $Pa(R) \cup Sp(R)$, and $\Pi_{X:Y}$ are all the $(Z \cup W)$-open paths from $X$ to $Y$, then
\[
\sigma_{XY \cdot ZW} =  \sigma_{XY \cdot Z} \frac{\sigma^2_{R \cdot ZW}}{\sigma^2_{R \cdot Z}}.
\]
\end{lemma}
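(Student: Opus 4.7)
The plan is to follow the blueprint of Lemma \ref{lem:nonroot3} verbatim, substituting the hypothesis $Pa(W) \cup Sp(W) = \emptyset$ with $Ch(W) = \emptyset$ and exploiting the strengthened hypothesis that $\Pi_{X:Y}$ enumerates \emph{every} $(Z \cup W)$-open path from $X$ to $Y$. I would split on the form of the shared subpath. When every path in $\Pi_{X:Y}$ contains a subpath $\oa R \ra$, the goal is to establish the three independencies $X \ci W | Z \cup R$, $Y \ci W | Z \cup R$, and $X \ci Y | Z \cup R$, and then conclude via Lemma \ref{lem:aux1}. When every path contains $\oa Y = R$, the goal is to establish $X \ci W | Z \cup Y$ and conclude via Lemma \ref{lem:aux2}.

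To prove $X \ci W | Z \cup R$ in Case 1, I proceed by contradiction: assume a $(Z \cup R)$-open route $\rho_{X:W}$ and branch on whether $R$ or $W$ is reached first. The ``$R$ first'' branch runs exactly as in Lemma \ref{lem:nonroot3}: the route must arrive at $R$ via an edge $\oa R$ (otherwise $\rho_{X:R} \cup \pi_{R:Y}$ would yield a $Z$-open path without the required subpath), so for $(Z\cup R)$-openness it must leave $R$ via $R \ra$, contradicting the hypothesis that $W$ is not $Z$-connected to $R$ through $Pa(R) \cup Sp(R)$. The ``$W$ first'' branch is where the new hypotheses bite: concatenate with the given $Z$-open path $\varrho_{R:W}$ through $Ch(R)$ and with $\pi_{R:Y}$. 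Since $Ch(W) = \emptyset$, both edges incident to $W$ in the combined route have their arrowheads at $W$, so $W$ becomes a collider; meanwhile the edges incident to $R$ both have tails at $R$, so $R$ is not a collider. Hence the route is $(Z \cup W)$-open. By the new hypothesis, its corresponding path lies in $\Pi_{X:Y}$, yet the route, and hence the path, contains no edge $\oa R$ belonging to any path in $\Pi_{X:Y}$, contradicting the required subpath. The symmetric argument with $\rho_{Y:W}$ yields $Y \ci W | Z \cup R$.

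For $X \ci Y | Z \cup R$, a $(Z \cup R)$-open path $\rho_{X:Y}$ either avoids $R$, in which case it is $Z$-open and lacks the required subpath, or contains $R$ as a collider. Tracing the $Z$-open path $\varrho_{R:W}$ through $Ch(R)$ shows that some descendant of $R$ lies in $Z \cup W$, so $\rho_{X:Y}$ is in fact $(Z \cup W)$-open and therefore in $\Pi_{X:Y}$, which contradicts collider-freeness. Case 2 is handled analogously, with the ``$W$ first'' collider argument giving $X \ci W | Z \cup Y$. I expect the main bookkeeping obstacle to be verifying in each ``$W$ first'' branch that the conditioning set of the concatenated route is upgraded from $Z$ to exactly $Z \cup W$ and nothing else, so that the stronger enumeration hypothesis on $\Pi_{X:Y}$ is the minimum needed to reach the contradiction and Lemmas \ref{lem:aux1} and \ref{lem:aux2} apply unchanged.
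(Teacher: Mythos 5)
Your overall route is exactly the paper's: prove $X \ci W | Z \cup R$, $Y \ci W | Z \cup R$ and $X \ci Y | Z \cup R$ (resp. $X \ci W | Z \cup Y$ in the $\oa Y = R$ case) and finish with Lemma \ref{lem:aux1} (resp. Lemma \ref{lem:aux2}), the $W$-first branches using $Ch(W)=\emptyset$ to turn $W$ into a collider and the enumeration hypothesis to force the corresponding $(Z \cup W)$-open path into $\Pi_{X:Y}$. However, your $R$-first branch is garbled. Since $R$ lies in the conditioning set $Z \cup R$, every occurrence of $R$ in a $(Z \cup R)$-open route must be a collider, so the route must leave $R$ through $Pa(R) \cup Sp(R)$ --- not ``via $R \ra$'' as you write; an exit $R \ra$ would make $R$ a non-collider and close the route. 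Moreover, your stated conclusion does not follow from your stated step: an exit via $R \ra$ would connect $W$ to $R$ through $Ch(R)$, which the lemma explicitly permits; it is the forced exit through $Pa(R) \cup Sp(R)$ at the last occurrence of $R$ (whose tail subroute is $Z$-open and reaches $R$ through $Pa(R) \cup Sp(R)$) that contradicts the hypothesis that $W$ is not $Z$-connected to $R$ by any such path. You have spliced the $R$-first argument of Lemmas \ref{lem:nonroot1} and \ref{lem:nonroot2} (conditioning set $Z$, contradiction with the $Ch(R)$ clause) onto the hypotheses of Lemmas \ref{lem:nonroot3} and \ref{lem:nonroot4}; as written the step fails, though the repair requires no new idea.

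A smaller omission: in the $X \ci Y | Z \cup R$ argument, your dichotomy ``either the path avoids $R$, in which case it is $Z$-open, or it contains $R$ as a collider'' misses the case in which $R$ is absent from the path but is a descendant of one of its colliders whose only access to the conditioning set is through $R$; such a path is $(Z \cup R)$-open without being $Z$-open, so your first branch breaks. The paper covers this by allowing $R$ to be ``a collider or a descendant of a collider'' in the path, and your own next step already closes the gap: some descendant of $R$ lies in $Z \cup W$ (via the $Ch(R)$-path to $W$), so that collider is activated by $Z \cup W$, the path is $(Z \cup W)$-open, hence in $\Pi_{X:Y}$, contradicting collider-freeness. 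With these two local corrections your argument coincides with the paper's proof.
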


\begin{proof}
Consider first the case where $\oa R \ra$ is a subpath of every path in $\Pi_{X:Y}$. Assume to the contrary that $X \nci W | Z \cup R$ and let $\rho_{X:W}$ be a $(Z \cup R)$-open {\bf route}. Follow $\rho_{X:W}$ until reaching $R$ or $W$, and let $\pi_{X:Y} \in \Pi_{X:Y}$.
\begin{itemize}
\item If $R$ is reached first, then note that $R$ must be a collider in $\rho_{X:W}$ for this to be $(Z \cup R)$-open. However, the last occurrence of $R$ in $\rho_{X:W}$ contradicts the assumption that $W$ is not $Z$-connected to $R$ through $Pa(R) \cup Sp(R)$.

\item If $W$ is reached first, then let $\varrho_{R:W}$ denote a $Z$-open path that does not contain any node that is in some path in $\Pi_{X:Y}$ except $R$. Such a path exists by the assumptions in the lemma. Thus, $\rho_{X:W} \cup \varrho_{W:R} \cup \pi_{R:Y}$ is a $(Z \cup W)$-open route, because $W$ is a collider in it whereas $R$ is not. The former follows from the assumption that $Ch(W)=\emptyset$. Moreover, the route does not contain any edge $\oa R$ that is in some path in $\Pi_{X:Y}$ because, otherwise, $\rho_{X:W}$ contains the edge ($\varrho_{W:R}$ and $\pi_{R:Y}$ cannot by definition) and thus it reaches $R$ first. Then, the path corresponding to $\rho_{X:W} \cup \varrho_{W:R} \cup \pi_{R:Y}$ contradicts the assumptions in the lemma.
\end{itemize}
Consequently, $X \ci W | Z \cup R$. Now, assume to the contrary that $Y \nci W | Z \cup R$ and let $\rho_{Y:W}$ be a $(Z \cup R)$-open {\bf route}. Follow $\rho_{Y:W}$ until reaching $R$ or $W$, and let $\pi_{X:Y} \in \Pi_{X:Y}$.
\begin{itemize}
\item If $R$ is reached first, then note that $R$ must be a collider in $\rho_{Y:W}$ for this to be $(Z \cup R)$-open. However, the last occurrence of $R$ in $\rho_{Y:W}$ contradicts the assumption that $W$ is not $Z$-connected to $R$ through $Pa(R) \cup Sp(R)$.

\item If $W$ is reached first, then let $\varrho_{R:W}$ denote a $Z$-open path that leaves $R$ through $Ch(R)$ and that does not contain any node that is in some path in $\Pi_{X:Y}$ except $R$. Such a path exists by the assumptions in the lemma. Thus, $\pi_{X:R} \cup \varrho_{R:W} \cup \rho_{W:Y}$ is a $(Z \cup W)$-open route, because $W$ is a collider in it whereas $R$ is not. The former follows from the assumption that $Ch(W)=\emptyset$. Moreover, the route does not contain any edge $R \ra$ that is in some path in $\Pi_{X:Y}$ because, otherwise, $\rho_{W:Y}$ contains the edge ($\pi_{X:R}$ and $\varrho_{R:W}$ cannot by definition) and thus it reaches $R$ first. Then, the path corresponding to $\pi_{X:R} \cup \varrho_{R:W} \cup \rho_{W:Y}$ contradicts the assumptions in the lemma.
\end{itemize}
Consequently, $Y \ci W | Z \cup R$. Now, assume to the contrary that $X \nci Y | Z \cup R$ and let $\rho_{X:Y}$ be a $(Z \cup R)$-open {\bf path}. Note that $R$ must be a collider or a descendant of a collider in $\rho_{X:Y}$ because, otherwise, $R$ is not in $\rho_{X:Y}$ and, thus, $\rho_{X:Y}$ is $Z$-open, which contradicts the assumptions in the lemma. Note also that the assumption that $W$ is $Z$-connected to $R$ through $Ch(R)$ implies that some descendant of $R$ is in $Z \cup W$. Then, $\rho_{X:Y}$ is $(Z \cup W)$-open, which contradicts the assumptions in the lemma. Consequently, $X \ci Y | Z \cup R$. Therefore, the desired result follows from Lemma \ref{lem:aux1}.

Finally, consider the case where $\oa Y = R$ is a subpath of every path in $\Pi_{X:Y}$. Assume to the contrary that $X \nci W | Z \cup Y$ and let $\rho_{X:W}$ be a $(Z \cup Y)$-open {\bf route}. Follow $\rho_{X:W}$ until reaching $Y$ or $W$.
\begin{itemize}
\item If $Y$ is reached first, then note $Y$ must be a collider in $\rho_{X:W}$ for this to be $(Z \cup Y)$-open. However, the last occurrence of $Y$ in $\rho_{X:W}$ contradicts the assumption that $W$ is not $Z$-connected to $R$ through $Pa(Y) \cup Sp(Y)$.

\item If $W$ is reached first, let $\varrho_{Y:W}$ denote a $Z$-open path that does not contain any node that is in some path in $\Pi_{X:Y}$ except $Y$. Such a path exists by the assumptions in the lemma. Then, $\varrho_{X:W} \cup \rho_{W:Y}$ is a $(Z \cup W)$-open route, because $W$ is a collider in it due to the assumption that $Ch(W)=\emptyset$. Moreover, the route does not contain any edge $\oa Y$ that is in some path in $\Pi_{X:Y}$ because, otherwise, $\rho_{X:W}$ contains the edge ($\varrho_{Y:W}$ cannot by definition) and thus it reaches $Y$ first. Then, the path corresponding to $\varrho_{X:W} \cup \rho_{W:Y}$ contradicts the assumptions in the lemma.
\end{itemize}
Consequently, $X \ci W | Z \cup Y$ and, thus, the desired result follows from Lemma \ref{lem:aux2}.
\end{proof}

\begin{proof}[Proof of Theorem \ref{the:condpathroot}]
Consider hereinafter the path diagram conditioned on $S$. We first compute $\sigma_{XY \cdot Z^1}$ from $\sigma_{XY}$ by adding the nodes in $Z^1$ to the conditioning set in the order $W_1, W_2, \ldots$. The assumption that $\Pi_{X:Y}$ are all the $Z$-open paths from $X$ to $Y$ implies that $\Pi_{X:Y}$ are all the $(W_{1:j-1})$-open paths from $X$ to $Y$ because, otherwise, any other path cannot be closed afterwards which contradicts the assumption. To see it, note that a node $W \in Z$ does not close any open path, since there is no subpath $\oa W \ra$ or $\la W \ra$ due to the conditioning operation. Then,
\[
\sigma_{XY \cdot W_1} =  \sigma_{XY} \frac{\sigma^2_{X_1 \cdot W_1}}{\sigma^2_{X_1}}
\]
by Lemma \ref{lem:root1} if $Pa(W_1) \cup Sp(W_1) = \emptyset$, or Lemma \ref{lem:root2} if $Ch(W_1)=\emptyset$. Likewise, 
\[
\sigma_{XY \cdot W_1 W_2} =  \sigma_{XY \cdot W_1} \frac{\sigma^2_{X_1 \cdot W_1 W_2}}{\sigma^2_{X_1 \cdot W_1}}
\]
by Lemma \ref{lem:root1} or \ref{lem:root2}. Combining the last two equations gives
\[
\sigma_{XY \cdot W_1 W_2} =  \sigma_{XY} \frac{\sigma^2_{X_1 \cdot W_1 W_2}}{\sigma^2_{X_1}}.
\]
Continuing with this process for the rest of the nodes in $Z^1$ gives
\begin{equation}\label{eq:Z1a}
\sigma_{XY \cdot Z^1} =  \sigma_{XY} \frac{\sigma^2_{X_1 \cdot Z^1}}{\sigma^2_{X_1}}.
\end{equation}

Now, we compute $\sigma_{XY \cdot Z^1_1}$ from $\sigma_{XY \cdot Z^1}$ by adding the nodes in $Z_1$ to the conditioning set in the order $W_1, W_2, \ldots$. Recall from above that $\Pi_{X:Y}$ are all the $(Z^1 \cup W_{1:j-1})$-open paths from $X$ to $Y$. Then,
\[
\sigma_{XY \cdot Z^1 Z_1} =  \sigma_{XY \cdot Z^1} \frac{\sigma^2_{X_1 \cdot Z^1 Z_1}}{\sigma^2_{X_1 \cdot Z^1}}
\]
by repeating the reasoning that led to Equation \ref{eq:Z1a}. Moreover, combining the last two equations yields
\begin{equation}\label{eq:Z11}
\sigma_{XY \cdot Z^1_1} =  \sigma_{XY} \frac{\sigma^2_{X_1 \cdot Z^1_1}}{\sigma^2_{X_1}}.
\end{equation}

Now, we compute $\sigma_{XY \cdot Z^1_1 Z^2}$ from $\sigma_{XY \cdot Z^1_1}$ by adding the nodes in $Z^2$ to the conditioning set in the order $W_1, W_2, \ldots$. The assumption that there is no $Z$-open route $X_2 \ra A \oo \cdots \oo B \oa X_2$ implies that there is no $(Z^1_1 \cup W_{1:j-1})$-open path from $W_j$ to $X_2$ through $Ch(X_2)$. To see it, assume the opposite. Then, there are $(Z^1_1 \cup W_{1:j-1})$-open paths from $W_j$ to $X_2$ through both $Ch(X_2)$ and $Pa(X_2) \cup Sp(X_2)$. This implies that there is a route $X_2 \ra A \oo \cdots \oo B \oa X_2$ that contains $W_j$, and the route is $(Z^1_1 \cup W_{1:j-1})$-open or $(Z^1_1 \cup W_{1:j})$-open. Then, there is a path $\ra A \oo \cdots \oo B \oa$ that is $(Z^1_1 \cup W_{1:j-1})$-open or $(Z^1_1 \cup W_{1:j})$-open by Lemma \ref{lem:pathroute}. However, this path cannot be closed afterwards which contradicts the assumption. To see it, note that a node $W \in Z$ does not close any open path, since there is no subpath $\oa W \ra$ or $\la W \ra$ due to the conditioning operation. Consequently, there is no $(Z^1_1 \cup W_{1:j-1})$-open path from $W_j$ to $X_2$ through $Ch(X_2)$. Recall also from above that $\Pi_{X:Y}$ are all the $(Z^1_1 \cup W_{1:j-1})$-open paths from $X$ to $Y$. Then,
\[
\sigma_{XY \cdot Z^1_1 W_1} =  \sigma_{XY \cdot Z^1_1}.
\]
by Lemma \ref{lem:nonroot1} if $Pa(W_1) \cup Sp(W_1) = \emptyset$, or Lemma \ref{lem:nonroot2} if $Ch(W_1)=\emptyset$. Likewise,
\[
\sigma_{XY \cdot Z^1_1 W_1 W_2} =  \sigma_{XY \cdot Z^1_1 W_1}.
\]
by Lemma \ref{lem:nonroot1} or \ref{lem:nonroot2}. Combining the last two equations gives
\[
\sigma_{XY \cdot W_1 W_2} =  \sigma_{XY \cdot Z^1_1}.
\]
Continuing with this process for the rest of the nodes in $Z^2$ gives
\begin{equation}\label{eq:Z2a}
\sigma_{XY \cdot Z^1_1 Z^2} =  \sigma_{XY \cdot  Z^1_1}.
\end{equation}

Now, we compute $\sigma_{XY \cdot Z^1_1 Z^2_2}$ from $\sigma_{XY \cdot Z^1_1 Z^2}$ by adding the nodes in $Z_2$ to the conditioning set in the order $W_1, W_2, \ldots$. Recall from above that $\Pi_{X:Y}$ are all the $(Z^{1:2}_1 \cup W_{1:j-1})$-open paths from $X$ to $Y$. Recall also from above that the assumption that there is no $Z$-open route $X_2 \ra A \oo \cdots \oo B \oa X_2$ implies that there is no $(Z^{1:2}_1 \cup W_{1:j-1})$-open path from $W_j$ to $X_2$ through $Pa(X_2) \cup Sp(X_2)$. Then,
\[
\sigma_{XY \cdot Z^1_1 Z^2_2} =  \sigma_{XY \cdot Z^1_1 Z^2} \frac{\sigma^2_{X_2 \cdot Z^1_1 Z^2_2}}{\sigma^2_{X_2 \cdot Z^1_1 Z^2}}
\]
by repeating the reasoning that led to Equation \ref{eq:Z1a} but using Lemmas \ref{lem:nonroot3} and \ref{lem:nonroot4} instead. Moreover, combining the last equation with Equations \ref{eq:Z11} and \ref{eq:Z2a} gives
\[
\sigma_{XY \cdot Z^1_1 Z^2_2} =  \sigma_{XY} \frac{\sigma^2_{X_1 \cdot Z^1_1}}{\sigma^2_{X_1}} \frac{\sigma^2_{X_2 \cdot Z^1_1 Z^2_2}}{\sigma^2_{X_2 \cdot Z^1_1 Z^2}}.
\]

Finally, continuing with the process above for $X_3, \ldots, X_{m+n}$ yields
\[
\sigma_{X Y \cdot Z^{1:m+n}_{1:m+n}} = \sigma_{X Y} \frac{\sigma^2_{X_1 \cdot Z_1^1}}{\sigma^2_{X_1}} \prod_{i=2}^{m+n} \frac{\sigma^2_{X_i \cdot Z_{1:i}^{1:i}}}{\sigma^2_{X_i \cdot Z_{1:i-1}^{1:i}}}
\]
which implies the desired result by repeated application of Lemma \ref{lem:aux3}.
\end{proof}

\begin{proof}[Proof of Theorem \ref{the:condpathnonroot}]
The proof is analogous to that of Theorem \ref{the:condpathroot}.
\end{proof}

\bibliographystyle{plainnat}
\bibliography{MoreCondPathAnalysis}

\end{document}